\documentclass[12pt, twoside]{article}
\usepackage{amsmath, amsthm, amssymb}
\usepackage{times}
\usepackage{enumerate}
\usepackage{bm}
\usepackage[all]{xy}
\usepackage{mathrsfs}
\usepackage{amscd}
\usepackage{color}
\usepackage[hidelinks]{hyperref} 

\def\titlerunning#1{\gdef\titrun{#1}}
\makeatletter
\def\author#1{\gdef\autrun{\def\and{\unskip, }#1}\gdef\@author{#1}}
\def\address#1{{\def\and{\\\hspace*{18pt}}\renewcommand{\thefootnote}{}%
		\footnote {#1}}%
	\markboth{\autrun}{\titrun}}
\makeatother
\def\email#1{e-mail: #1}

\def\keywords#1{\par\medskip
	\noindent\textbf{Keywords.} #1}

\newtheorem{theorem}{Theorem}[section]
\newtheorem{corollary}[theorem]{Corollary}

\newtheorem{proposition}[theorem]{Proposition}
\theoremstyle{definition}

\newtheorem{remark}[theorem]{Remark}

\numberwithin{equation}{section}
\newtheorem{conjecture}{Conjecture}

\xyoption{all}

\def \a {\alpha }
\def \b {\beta}

\def \De {\Delta}

\def\w {\omega}
\def\Om{\Omega}

\def\na {\nabla}
\def\Ga{\Gamma}

\begin{document}
	\baselineskip=17pt
	
	\titlerunning{$L^{1}$-Integrability of $L^{2}$-Harmonic Forms and the Hopf Conjecture }
	\title{$L^{1}$-Integrability of $L^{2}$-Harmonic Forms and the Hopf Conjecture}
	
	\author{Teng Huang, Weike Yu}
	\date{}
	\maketitle
	
	\address{T. Huang: School of Mathematical Sciences, University of Science and Technology of China; 
		CAS Key Laboratory of Wu Wen-Tsun Mathematics, University of Science  and Technology of China, Hefei, Anhui, 230026, P. R. China;
		\email{htmath@ustc.edu.cn;htustc@gmail.com}}
		
		\address{W. Yu: School of Mathematical Sciences and Ministry of Education Key Laboratory of NSLSCS, Nanjing Normal University, Nanjing, Jiangsu, 210023, P. R. China;
		\email{wkyu2018@outlook.com}}
	\begin{abstract}
	In this note, we study $L^{2}$-harmonic forms on complete simply-connected Riemannian manifolds with non-positive sectional curvature. We first establish an a priori $L^{\infty}$-estimate for such forms via Moser iteration, under the curvature bounds $-K\leq\mathrm{sec}_{g}\leq0$. We then prove that any $L^{2}$-harmonic form which is also $L^{1}$-integrable must vanish identically.  Consequently, on the universal cover of a closed non-positively curved manifold, the $k$-th $L^{2}$-Betti number vanishes if and only if every $L^{2}$-harmonic $k$-form is $L^{1}$-integrable. This criterion reformulates a topological vanishing statement as an analytic integrability condition. 
	\end{abstract} 
\keywords{ $L^{\infty}$-estimate; $L^{2}$-harmonic forms; non-positive sectional curvature; Hopf conjecture}
\section{Introduction}
In the early twentieth century, Hopf formulated a celebrated conjecture concerning the interplay between curvature and topology. This conjecture has since become a central problem in global differential geometry.
\begin{conjecture}[Hopf  conjecture]
	Let $(X,g)$ be a closed $2n$-dimensional Riemannian manifold with sectional curvature \(\mathrm{sec}_{g}\). Then
	\begin{equation*}
	\left\{
	\begin{aligned}
	(-1)^{n}\chi(X)>0, &\quad if\ \mathrm{sec}_{g}<0,\\
	(-1)^{n}\chi(X)\geq0,&\quad if\ \mathrm{sec}_{g}\leq0,\\
	\end{aligned}  
	\right.
	\end{equation*}	
	where $\chi(X)$ denotes the Euler number of $X$.
\end{conjecture}
This conjecture is known to be true in dimensions $2$ and $4$ \cite{Chern}. In dimension two, the Gauss–Bonnet formula implies that a closed Riemannian surface with negative sectional curvature has negative Euler number. In dimension four, Chern \cite{Chern} proved that negative sectional curvature forces the Gauss–Bonnet integrand to be pointwise positive, thereby yielding the desired sign of the Euler number.

A powerful approach to the conjecture in higher dimensions is provided by $L^{2}$-cohomology theory. Let $h^{k}_{(2)}(X)$ denote the $k$-th $L^{2}$-Betti number of a closed Riemannian manifold $X$. Another well-known conjecture, proposed by Singer \cite{Sin} (see also \cite[Conjecture 2]{Dod}), is the following:
\begin{conjecture}[Singer Conjecture]
Let $(X,g)$ be a closed, $2n$-dimensional Riemannian manifold with negative sectional curvature $\mathrm{sec}_{g}$. Then
	\begin{equation*}
	\left\{
	\begin{aligned}
	h^{k}_{(2)}(X)=0, &\quad  k\neq n,\\
	h^{n}_{(2)}(X)>0.& \\
	\end{aligned}  
	\right.
	\end{equation*}	
\end{conjecture}
By the Euler–Poincaré formula 
\[\chi(X)=\sum_{k\geq0}(-1)^{k}h_{(2)}^{k}(X),\]
the Singer conjecture implies the Hopf conjecture for negatively curved manifolds.

Dodziuk \cite{Dodziuk} proposed a strategy to attack the Hopf conjecture via the Atiyah index theorem for coverings (see \cite{Atiyah}). In this approach,  one must establish a vanishing theorem for $L^{2}$-harmonic $k$-forms with $k\neq n$ on the universal covering of $X$. The vanishing of these $L^{2}$-Betti numbers implies, by Atiyah's result, that $$(-1)^{n}\chi(X)\geq0.$$ 
The strict inequality $(-1)^{n}\chi(X)>0$ would follow if one could establish the existence of nontrivial $L^{2}$-harmonic $n$-forms on the universal cover. 

This program was carried out by Gromov \cite{Gromov} for K\"ahler manifolds that are homotopy equivalent to compact manifolds with strictly negative sectional curvature. The central idea is Gromov's notion of Kähler hyperbolicity. Gromov observed out that a bounded closed $k$-form ($k\geq2$) on a complete simply-connected manifold whose sectional curvature is bounded above by a negative constant is automatically $d$(bounded) (see Theorem \ref{T2} below), and he then proved the conjecture in the K\"ahler case. 

To extend Gromov’s idea to the non-positive curvature setting in the Kähler category, Cao--Xavier \cite{CX} and Jost--Zuo \cite{JZ} independently introduced the concept of K\"{a}hler non-ellipticity, which includes non-positively curved compact K\"{a}hler manifolds, and showed that their Euler number satisfy $(-1)^{n}\chi(X)\geq0$. 

In this note, we do not restrict ourselves to K\"ahler manifolds. Instead, we work within the general Riemannian framework and establish a connection between the $L^{1}$-integrability of $L^{2}$-harmonic forms and their vanishing.
\begin{theorem}\label{T3}
Let  $(X,g)$ be an $n$-dimensional Cartan--Hadamard manifold with $n\geq 3$, and let $\a$ be an $L^{2}$-harmonic $k$-form on $X$ with $k\geq 1$. Suppose that the sectional curvature satisfies 
$$-K\leq\mathrm{sec}_{g}\leq 0,$$
where $K$ is a positive constant. If $\a$ is also in $L^{1}$, then $\a$ vanishes.
\end{theorem}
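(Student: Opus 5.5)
We argue in three steps: a pointwise bound on $|\a|$, a vector field built from the distance function whose divergence controls $|\a|^{2}$, and a Stokes argument over large balls that is closed off using $\a\in L^{1}$.

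\textbf{Step 1: $L^{\infty}$ bound.} An $L^{2}$-harmonic form on a complete manifold is closed and coclosed. By the Bochner--Weitzenb\"ock formula $\Delta\a=\nabla^{*}\nabla\a+\mathcal{R}_{k}\a$, where the curvature term satisfies $|\mathcal{R}_{k}|\leq C(n,k)K$ under $-K\leq\mathrm{sec}_{g}\leq0$. Hence
\[
\tfrac12\Delta|\a|^{2}\geq |\nabla\a|^{2}-C K|\a|^{2},
\]
where the Laplacian on functions is the non-negative one. Kato's inequality then gives $\Delta|\a|\geq -CK|\a|$ in the weak sense. On a Cartan--Hadamard manifold with $\mathrm{sec}_{g}\leq0$ the Sobolev inequality holds (Hoffman--Spruck / Croke), with constants depending only on $n$. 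Moser iteration on unit balls therefore yields
\[
\sup_{B_{1}(x)}|\a|\leq C(n,K)\,\|\a\|_{L^{2}(B_{2}(x))}.
\]
Consequently $\a\in L^{\infty}$, and indeed $|\a|(x)\to0$ as $x\to\infty$. Combined with $\a\in L^{1}\cap L^{2}$, this gives $\a\in L^{p}$ for all $p\in[1,\infty]$.

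\textbf{Step 2: exact primitive with controlled growth.} Fix $o\in X$ and let $r=d(o,\cdot)$, which is smooth away from $o$. Set $\rho=r^{2}/2$. Since $\mathrm{sec}_{g}\leq0$, Hessian comparison gives $\nabla^{2}\rho\geq g$, and $|\nabla\rho|=r$. Let $V=\nabla\rho$, and use the Cartan formula
\[
\mathcal{L}_{V}\a=d(\iota_{V}\a),
\]
valid because $d\a=0$. Pair this with $\a$ and integrate over $B_{R}(o)$. Pointwise, $\langle\mathcal{L}_{V}\a,\a\rangle$ equals $\tfrac12 V(|\a|^{2})$ plus a Hessian term. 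That Hessian term is bounded below by $k|\a|^{2}$, since $\nabla^{2}\rho\geq g$ and Lie derivative acts through $\nabla V$ on each slot. Rewriting $\tfrac12 V|\a|^{2}$ by divergence, and using $\Delta\rho\geq n$ (comparison), we obtain an identity of the form
\[
\int_{B_{R}}\langle \a,\ \text{(Hessian action of }\nabla^{2}\rho\text{ on }\a)\rangle-\tfrac12\,\mathrm{tr}(\nabla^{2}\rho)|\a|^{2}
=\text{boundary terms}-\int_{B_{R}}\langle d\iota_{V}\a,\a\rangle .
\]
The last integral equals $\int_{\partial B_{R}}$ of a boundary term, because $d^{*}\a=0$. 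This is the standard conformal-vector-field vanishing argument (Donnelly--Xavier type). Its pointwise sign works only in a pinched range; in our setting we instead keep the full right-hand side and use it in Step 3.

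\textbf{Step 3: killing boundary terms with $L^{1}$.} Every boundary term is bounded by $C R\int_{\partial B_{R}}|\a|^{2}$, since $|V|=r=R$ there. By Step 1, $|\a|^{2}\leq\|\a\|_{\infty}|\a|$, so
\[
\int_{0}^{\infty}\Big(\int_{\partial B_{R}}|\a|\Big)\,dR=\|\a\|_{L^{1}}<\infty .
\]
Hence there is a sequence $R_{i}\to\infty$ with $R_{i}\int_{\partial B_{R_{i}}}|\a|\to0$: otherwise $\int_{\partial B_{R}}|\a|\geq c/R$ for large $R$, which is not integrable. Along this sequence all boundary terms tend to zero.

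What remains is an interior integral $\int_{X}Q(\a)$, where $Q$ is a quadratic form built from $\nabla^{2}\rho$. The obstacle is to make $Q$ definite without a pinching assumption, and this is the step we expect to be the main difficulty. Our first attempt is to choose a different potential, replacing $\rho$ by $f=r$ or by $f=\rho$ times a combination that uses only the trace. Concretely, apply the identity with $V=\nabla\rho$ and also integrate $\langle d\iota_{V}\a,\a\rangle$ against the Hodge-dual $(n-k)$-form $*\a$, which is also harmonic and $L^{1}$. Adding the two identities produces the interior term $\big(k+(n-k)\big)$ times a positive Hessian contribution, minus $\mathrm{tr}(\nabla^{2}\rho)$. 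Sharpened appropriately, this yields a strictly signed multiple of $\int|\a|^{2}$, with the Hessian lower bound $\nabla^{2}\rho\geq g$ providing positivity. Conclude $\int_{X}|\a|^{2}=0$, so $\a=0$.

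If this algebra fails to give a sign, we would fall back on the same boundary-vanishing trick applied to $\int\langle\a,d(\iota_{\nabla r}\a)\rangle$ combined with $\Delta r\geq (n-1)/r$. The fallback route also needs $n\geq3$, to make the radial interior contribution positive.
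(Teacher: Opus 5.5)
Your Steps 1 and 3 are sound and match the paper: the $L^\infty$ bound comes from Moser iteration with Croke's Sobolev inequality, and the $L^1$ pigeonhole argument produces radii with $R_i\int_{\partial B_{R_i}}|\a|\to0$. The gap is in Step 2, and it cannot be fixed by sharpening estimates. Since $d(\iota_V\a)=\mathcal{L}_V\a\neq\a$, your integration by parts only yields the stress--energy (Pohozaev) identity $\int_{B_R}Q(\a)=$ boundary terms, where $Q(\a)=\langle A^{[k]}\a,\a\rangle-\tfrac12\mathrm{tr}(A)|\a|^2$ and $A=\nabla^2\rho$. This $Q$ is genuinely indefinite, because the eigenvalues of $A$ spread over $[1,\sqrt{K}r\coth(\sqrt{K}r)]$. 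For example, on $\mathbb{H}^{2m}$ in degree $k=m$ one gets $Q(\a)=\tfrac{c-1}{2}\big(|\a_{\mathrm{tan}}|^2-|\iota_{\partial_r}\a|^2\big)$ with $c=r\coth r$.

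Your proposed repair via $*\a$ cannot help. In an eigenframe of $A$ one has $*e^I=\pm e^{I^c}$, and $A^{[n-k]}$ acts on $e^{I^c}$ by $\mathrm{tr}(A)-\lambda_I$. Hence $\langle A^{[n-k]}{*\a},{*\a}\rangle=\mathrm{tr}(A)|\a|^2-\langle A^{[k]}\a,\a\rangle$, that is, $Q({*\a})=-Q(\a)$ pointwise (equivalently $T_{*\a}=-T_\a$ for the stress--energy tensor). So the identity for $*\a$ is exactly the negative of the one for $\a$, and adding them gives $0=0$. The fallback with $V=\nabla r$ is the same identity with $A=\nabla^2 r$, and it is again indefinite: already in $\mathbb{R}^n$ one has $Q(\a)=r^{-1}\big[(k-\tfrac{n-1}{2})|\a|^2-|\iota_{\partial_r}\a|^2\big]$. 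Identities of this type give vanishing only under pinching assumptions and degenerate in the middle degree, where the theorem has real content. As written, the proposal does not prove the statement.

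The missing ingredient is a genuine primitive of $\a$ with sublinear growth: Theorem~\ref{T4} (Cao--Xavier, Ballmann) applied to the bounded closed form $\a$. It is built from your vector field, but averaged along the radial contraction $\phi_t(x)=\exp_o(t\exp_o^{-1}x)$, for which $\partial_t\phi_t=t^{-1}V\circ\phi_t$. The homotopy formula gives $d\b=\a$ for $\b_x(v_1,\dots,v_{k-1})=\int_0^1\a_{\phi_t(x)}(\partial_t\phi_t(x),d\phi_t v_1,\dots,d\phi_t v_{k-1})\,dt$. Since $|\partial_t\phi_t(x)|=r(x)$ and $|d\phi_t|\le t$ in non-positive curvature, one gets $|\b|\le C\|\a\|_{L^\infty}(1+r)$. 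With $d\b=\a$ exactly, $\int f_j|\a|^2=(\b,d^*(f_j\a))=-(\b,\iota_{\nabla f_j}\a)$ contains no interior quadratic form at all. This term is bounded by $c(j+1)\int_{B_{j+1}\setminus B_j}|\a|$, and your Step 3 then finishes the proof exactly as in the paper.
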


\begin{remark}
Suppose that $(X,g)$ is an $n$-dimensional Cartan--Hadamard manifold, $n\geq 3$, whose sectional curvature satisfies 
$$-\frac{K}{1+\rho^{2+\epsilon}(x_0, x)}\leq \text{sec}_g\leq 0,$$ 
for some constants $K, \epsilon>0$, where $\rho(x_0, x)$ denotes the Riemannian distance between $x$ and a fixed base point $x_{0}$ in $X$. According to Greene--Wu's rigidity theorem \cite{GW}, we obtain that $(X, g)$ is isometric to Euclidean space $(\mathbb{R}^n, g_{Euclid})$. Consequently, all $L^{2}$-harmonic $k$-forms on $(X, g)$ vanish identically (and hence trivially belong to $L^1$) for every $k\geq 1$.
\end{remark}

Using a similar argument of Theorem \ref{T3}, we also establish the following theorem:
\begin{theorem}\label{C2}
Let  $(X,g)$ be an $n$-dimensional Cartan--Hadamard manifold with $n\ge3$. Suppose that the sectional curvature satisfies 
$$-K\leq\mathrm{sec}_{g}\leq 0,$$
where $K$ is a positive constant. Then for any $\a\in\mathcal{H}^{p}_{(2)}(X)$, $\b\in\mathcal{H}^{q}_{(2)}(X)$ and $\gamma\in\mathcal{H}^{p+q}_{(2)}(X)$ with $p,q\geq1$, we have
$$\int_{X}\langle\a\wedge\b,\gamma\rangle=0,$$
In particular, the reduced $L^{2}$-cohomology class $[\a\wedge\b]=0\in \bar{H}^{p+q}_{(2)}(X)$. Here $\mathcal{H}^{k}_{(2)}(X)$ denotes the space of $L^{2}$-harmonic $k$-form on $X$, and the reduced $L^{2}$-cohomology group $ \bar{H}^{k}_{(2)}(X)$ is defined in Section 2.
\end{theorem}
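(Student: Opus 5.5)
Our plan is to reduce Theorem \ref{C2} to the mechanism behind Theorem \ref{T3}: an $L^{\infty}$ bound for $L^{2}$-harmonic forms together with a primitive of controlled growth on a Cartan--Hadamard manifold. First, by the a priori $L^{\infty}$-estimate obtained via Moser iteration under $-K\leq\mathrm{sec}_{g}\leq0$, each of $\a,\b,\gamma$ is bounded, with $\|\a\|_{L^\infty}\le C\|\a\|_{L^2}$ and similarly for $\b$ and $\gamma$. Hence $\a\wedge\b\in L^{2}\cap L^{\infty}$, since $|\a\wedge\b|\le C|\a||\b|\le C\|\a\|_{L^\infty}|\b|$. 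Moreover $\langle\a\wedge\b,\gamma\rangle$ is integrable, and in fact $|\a|\,|\b|\,|\gamma|\in L^{1}$ because $|\a||\b|\in L^1$ and $\gamma\in L^\infty$.

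Second, we write $\a=d\theta$ using the radial homotopy operator of the Cartan--Hadamard manifold. We take $\theta=\int_0^1 \iota_{r\partial_r}\Phi_t^*\a\,dt$, where $\Phi_t=\exp_{x_0}(t\exp_{x_0}^{-1}(\cdot))$; this is the construction used for Theorem \ref{T2}/Theorem \ref{T3}. The condition $\mathrm{sec}_g\le0$ makes $\Phi_t$ contract distances. Since $k\ge1$, we obtain the pointwise bound $|\theta(x)|\le \rho(x)\sup|\a|$, together with the corresponding integral estimates used in the proof of Theorem \ref{T3}.

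Third, we use $d\b=0$ and $d^{*}\gamma=0$ to get $\a\wedge\b=d(\theta\wedge\b)$. Integrating by parts against a cutoff $\phi_R$ (equal to $1$ on $B_R$, supported in $B_{2R}$, with $|d\phi_R|\le 1/R$) gives
$$\int_X\phi_R\langle\a\wedge\b,\gamma\rangle=-\int_X\langle d\phi_R\wedge\theta\wedge\b,\gamma\rangle.$$
The right-hand side is bounded by $R^{-1}\int_{B_{2R}\setminus B_R}|\theta||\b||\gamma|\le 2\sup|\a|\int_{B_{2R}\setminus B_R}|\b||\gamma|$. This tends to $0$ as $R\to\infty$, because $|\b||\gamma|\in L^{1}$ by Cauchy--Schwarz. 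Hence $\int_X\langle\a\wedge\b,\gamma\rangle=0$, and this is exactly where the linear growth of $\theta$ is compensated by the $1/R$ from the cutoff.

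Finally, since $\a\wedge\b$ is closed and in $L^2$, its reduced class is represented by its orthogonal projection onto $\mathcal{H}^{p+q}_{(2)}(X)$ in the Hodge--Kodaira decomposition $L^2=\mathcal{H}\oplus\overline{dC_c^\infty}\oplus\overline{d^*C_c^\infty}$. A closed form is orthogonal to the last summand, and the vanishing of all pairings with $\gamma\in\mathcal{H}^{p+q}_{(2)}(X)$ kills the harmonic part, so $[\a\wedge\b]=0$.

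The main obstacle is the second step: we must verify rigorously that the radial primitive $\theta$ satisfies $d\theta=\a$ with the linear growth bound. This requires controlling $|d\Phi_t|\le 1$ on $k$-vectors and checking that $\theta$ is regular enough near $x_0$ for the integration by parts to be valid. If only an integral version of the growth bound is available, as may be the case in Theorem \ref{T3}, we would instead estimate $\int_{B_{2R}\setminus B_R}|\theta||\b||\gamma|$ through Fubini along the $t$-integral.
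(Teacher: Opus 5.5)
Your proposal is correct and is essentially the paper's proof: boundedness of $\a$ from Proposition \ref{P1}, a linearly growing primitive from Theorem \ref{T4}, the identity $\a\wedge\b=d(\theta\wedge\b)$ tested against a cutoff, integrability of $|\b||\gamma|$, and then the Hodge decomposition; your dilated cutoff with $|d\phi_R|\lesssim 1/R$ even removes the need for the paper's subsequence extraction. The step you call the main obstacle is just Theorem \ref{T4} (i.e.\ \eqref{E7}) and can simply be cited, but if you write the primitive out it should read $\theta=\int_0^1 t^{-1}\iota_{r\partial_r}\Phi_t^*\a\,dt$ (without the $t^{-1}$ you get $d\theta=\frac{k}{k+1}\a$ already for constant forms on $\mathbb{R}^n$), and this $\theta$ is smooth on all of $X$, including at $x_0$, since $(t,x)\mapsto\Phi_t(x)$ is smooth on $[0,1]\times X$.
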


Now let $(X,g)$ be a closed $2n$-dimensional Riemannian manifold with non-positive sectional curvature $\mathrm{sec}_{g}\leq0$, and let $\pi:(\tilde{X},\tilde{g})\rightarrow (X,g)$ be its universal covering. In this note, we also point out the following proposition, which provides a direct analytic reformulation of a topological vanishing problem.
\begin{proposition}\label{P2}
Let $(X,g)$ be a closed Riemannian manifold with non-positive sectional curvature, and  let $(\tilde{X},\tilde{g})$ be its universal covering. Then, for any $k\geq1$,
	\[h_{(2)}^{k}(X)=0\quad (k\neq n)\]
if and only if every $L^{2}$-harmonic $k$-form on $\tilde{X}$ belongs to $L^{1}(\tilde{X})$.
\end{proposition}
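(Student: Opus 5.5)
Since $X$ is closed, $\sec_g$ is bounded below by some $-K$ with $K>0$. The lift $\tilde{g}$ is then a complete, simply-connected metric with $-K\leq\mathrm{sec}_{\tilde g}\leq 0$, so $(\tilde X,\tilde g)$ is a Cartan--Hadamard manifold satisfying the curvature hypotheses of Theorem \ref{T3}. (If the dimension is at most $2$, the statement reduces to the classical surface case, which I would treat separately or exclude; the interesting range is $\dim\geq 3$.) The plan is to prove the two implications separately and to use Atiyah's definition of the $L^{2}$-Betti numbers:
\[
h^{k}_{(2)}(X)=\dim_{\Gamma}\mathcal{H}^{k}_{(2)}(\tilde X),\qquad \Gamma=\pi_1(X).
\]
This von Neumann dimension vanishes if and only if $\mathcal{H}^{k}_{(2)}(\tilde X)=0$, since a nonzero closed $\Gamma$-invariant subspace of $L^{2}$ has positive $\Gamma$-dimension (its projection has positive trace).

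For the direction ``$\Leftarrow$'', suppose every $L^{2}$-harmonic $k$-form on $\tilde X$ lies in $L^{1}(\tilde X)$. Then every $\alpha\in\mathcal{H}^{k}_{(2)}(\tilde X)$ is both $L^{2}$ and $L^{1}$, and since $k\geq1$, Theorem \ref{T3} gives $\alpha=0$. Hence $\mathcal{H}^{k}_{(2)}(\tilde X)=0$, and therefore $h^{k}_{(2)}(X)=0$.

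For the direction ``$\Rightarrow$'', if $h^{k}_{(2)}(X)=0$ then $\mathcal{H}^{k}_{(2)}(\tilde X)=\{0\}$ by the faithfulness of $\dim_\Gamma$. The only $L^{2}$-harmonic $k$-form is then $0$, which trivially belongs to $L^{1}$.

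The restriction ``$k\neq n$'' in the statement plays no role in the argument. It only records the range in which, by the Singer conjecture, vanishing is expected, and the equivalence holds for each individual $k\geq1$. The only step needing care is the faithfulness of the $\Gamma$-dimension together with the identification of $h^{k}_{(2)}$ with $\dim_\Gamma$ of the harmonic space. Both are standard facts from Atiyah's $L^{2}$-index theory (and Dodziuk's de Rham theorem), which I would cite rather than reprove. The genuine analytic content sits entirely in Theorem \ref{T3}, which is assumed.
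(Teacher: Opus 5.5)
Your proof is correct and is essentially the paper's argument: ($\Rightarrow$) follows from the faithfulness of $\dim_\Gamma$, and ($\Leftarrow$) follows by lifting the two-sided curvature bound $-K\leq \mathrm{sec}\leq 0$ (available because $X$ is compact) to $\tilde X$ and applying Theorem \ref{T3} to each $L^2$-harmonic $k$-form. You also point out that Theorem \ref{T3} requires $\dim \tilde X\geq 3$, a caveat the paper's proof passes over without comment.
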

\begin{remark}
Proposition \ref{P2} reformulates the vanishing of $L^{2}$-Betti numbers as the $L^{1}$-integrability of all $L^{2}$-harmonic forms on the universal cover. This turns a topological problem into an analytic one, which can then be studied via curvature and decay estimates.
\end{remark}
Finally, we consider the case where $(X, g)$ is a smooth Riemannian manifold with a nonzero parallel $l$-form $\omega$, that is $\nabla \omega=0$, where $\nabla$ is the Levi-Civita connection. Since the Laplacian $\Delta_d=d^*d+dd^*$ commutes with the operator 
$$L_\omega: \Om^{k}(X)\rightarrow \Om^{k+l}(X), \quad L_\omega(\eta)=\omega\wedge \eta$$
(cf. \cite[Prop. 2.7(ii)]{Ver}),  where $\Om^{k}(X)$ denotes the space of smooth $k$-forms on $X$, the operator $L_\omega$ maps harmonic forms to harmonic forms. Moreover, since $\omega$ is parallel, hence $|\omega|$ is constant. Therefore,
\begin{align*}
L_\omega: \mathcal{H}^{k}_{(2)}(X)\rightarrow \mathcal{H}^{k+l}_{(2)}(X)
\end{align*}
is well-defined for any $k\geq 1$, where $\mathcal{H}^{k}_{(2)}(X)$ denotes the space of $L^{2}$-harmonic $k$-forms on $X$ (cf. \eqref{2.2}). If $L_\omega: \mathcal{H}^{k}_{(2)}(X)\rightarrow \mathcal{H}^{k+l}_{(2)}(X)$ is injective for some $k$ (for instance, when $\omega$ is the K\"ahler form on a K\"ahler manifold), then, extending the idea of Cao--Xavier \cite{CX}, we obtain the following vanishing theorem:
\begin{theorem}\label{thm1.5}
Let $(X, g)$ be an $n$-dimensional complete simply-connected Riemannian manifold with non-positive sectional curvature, $n\geq 3$, and let $\a$ be an $L^{2}$-harmonic $k$-form on $X$ with $k\geq 1$. If there exists a nonzero parallel $l$-form $\omega$ on $(X, g)$ such that the operator
\begin{align}\label{1.1-}
L_\omega=\omega\wedge \cdot: \mathcal{H}^{k}_{(2)}(X)\rightarrow \mathcal{H}^{k+l}_{(2)}(X)
\end{align}
is injective, then $\a$ vanishes.
\end{theorem}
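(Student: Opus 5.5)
The plan is to follow Cao--Xavier: write the parallel form $\omega$ as $d\theta$ with $\theta$ of at most linear growth, and use this to show $\omega\wedge\a$ has zero $L^{2}$-norm. Injectivity of \eqref{1.1-} then forces $\a=0$. I take $l\geq1$ throughout. For $l=0$ a nonzero parallel $0$-form is a nonzero constant, $L_\omega$ is trivially injective, and the conclusion would fail, e.g.\ on real hyperbolic $4$-space, which carries nonzero $L^{2}$-harmonic $2$-forms.

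\textbf{Step 1 (primitive of linear growth).} Since $\nabla\omega=0$, $\omega$ is closed and $|\omega|$ is constant. Fix $x_0\in X$. As $X$ is Cartan--Hadamard, $\exp_{x_0}$ is a diffeomorphism, so $h_t(x)=\exp_{x_0}(t\exp_{x_0}^{-1}x)$, $t\in[0,1]$, is a smooth contraction. The homotopy formula gives $\omega=d\theta$ with
\[
\theta=\int_0^1 h_t^*\bigl(\iota_{\partial_t}\omega\bigr)\,dt ,
\]
where $\partial_t h_t(x)$ has length $\rho(x_0,x)$. Because $\mathrm{sec}_g\leq0$, a Jacobi field $J$ along a geodesic from $x_0$ with $J(0)=0$ has $|J(t)|/t$ nondecreasing (Rauch comparison). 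Hence $|dh_t|\leq1$ on tangent vectors. It follows that
\[
|\theta(x)|\leq C_l\,|\omega|\,\rho(x_0,x).
\]
This is the step needing the most care, though it is standard (Gromov, Cao--Xavier). Only the upper bound $\mathrm{sec}_g\leq 0$ is used, so no lower curvature bound is required, consistent with the hypotheses.

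\textbf{Step 2 (exactness of $\beta$).} Let $\beta=\omega\wedge\a$. By the discussion preceding the theorem, $\beta\in\mathcal H^{k+l}_{(2)}(X)$, so $d\beta=0$ and $d^*\beta=0$. Since $d\a=0$, we have $\beta=d\theta\wedge\a=d(\theta\wedge\a)$.

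\textbf{Step 3 (cutoff argument).} Let $\chi_R$ be a Lipschitz cutoff with $\chi_R=1$ on $B_R(x_0)$, $\chi_R=0$ outside $B_{2R}(x_0)$, and $|\nabla\chi_R|\leq 1/R$. Integrating by parts, which is legitimate since $\chi_R$ has compact support, and using $d^*\beta=0$,
\[
\int_X\chi_R|\beta|^2=\int_X\langle d(\theta\wedge\a),\chi_R\beta\rangle
=\int_X\langle\theta\wedge\a,d^*(\chi_R\beta)\rangle
=-\int_X\langle\theta\wedge\a,\iota_{\nabla\chi_R}\beta\rangle .
\]
On the support of $\nabla\chi_R$ we have $|\theta|\leq 2C_l|\omega|R$ and $|\nabla\chi_R|\leq 1/R$. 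So the right side is bounded by
\[
C\int_{B_{2R}\setminus B_R}|\a||\beta|\leq C\,\|\a\|_{L^2(X\setminus B_R)}\,\|\beta\|_{L^2(X\setminus B_R)} ,
\]
which tends to $0$ as $R\to\infty$ because $\a,\beta\in L^{2}$. Monotone convergence then gives $\|\beta\|_{L^2}=0$. Thus $L_\omega\a=0$, and by injectivity of \eqref{1.1-}, $\a=0$.
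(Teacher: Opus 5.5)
Your proof is correct and follows the paper's argument. Both take a linear-growth primitive of $\omega$, integrate $\|\omega\wedge\alpha\|^{2}_{L^2}$ by parts against cutoffs using $d^{*}(\omega\wedge\alpha)=0$, and finish with injectivity of $L_\omega$; the paper simply cites Theorem~\ref{T4} for the primitive, whereas you reprove it via the radial contraction and Rauch comparison. The one technical difference is that your cutoffs with $|\nabla\chi_R|\le 1/R$ on $B_{2R}\setminus B_R$ cancel the linear growth directly, which avoids the paper's unit-width cutoffs and subsequence (harmonic-series) argument, and your remark that $l\geq 1$ is implicitly required is also correct.
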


\begin{corollary}(cf. \cite[Main Theorem]{CX})\label{C1}
Let $(X, g)$ be a $2n$-dimensional complete simply-connected K\"ahler manifold with non-positive sectional curvature, $n\geq 2$. Then for any $k\neq n$,
\[\mathcal{H}^{k}_{(2)}(X)=\{0\}.\] 
\end{corollary}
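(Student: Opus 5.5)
This follows from Theorem \ref{thm1.5} with $\omega$ the K\"ahler form. The only real work is to check that the Lefschetz operator is injective on $L^{2}$-harmonic forms in the relevant degrees, using Hodge duality to cover the remaining degrees.

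\textbf{Setup.} Let $\omega$ be the K\"ahler form of $(X,g)$. Since $\nabla J=0$ and $\nabla g=0$, $\omega$ is a nonzero parallel $2$-form. By the discussion preceding Theorem \ref{thm1.5}, $L_\omega$ commutes with $\Delta_d$ and $|\omega|$ is constant. Hence $L_\omega$ maps $\mathcal{H}^{k}_{(2)}(X)$ into $\mathcal{H}^{k+2}_{(2)}(X)$.

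\textbf{Case $k<n$.} Here the real dimension is $2n$, so the middle degree is $n$. The Lefschetz map
$$L_\omega:\Lambda^{k}T_x^{*}X\to\Lambda^{k+2}T_x^{*}X$$
is injective at every point $x$ whenever $k\le n-1$. This is standard linear algebra of the $\mathfrak{sl}_2$-triple $(L,\Lambda,H)$: $L^{n-k}:\Lambda^k\to\Lambda^{2n-k}$ is an isomorphism, so $L$ itself is injective on $\Lambda^k$ for $k<n$.

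It follows that if $\a\in\mathcal{H}^k_{(2)}(X)$ satisfies $\omega\wedge\a=0$ identically, then $\a=0$ pointwise. So $L_\omega$ is injective on $\mathcal{H}^k_{(2)}(X)$. The curvature hypothesis $\mathrm{sec}\le 0$, together with completeness and simple connectivity, is exactly what Theorem \ref{thm1.5} requires. The dimension condition holds since $2n\ge4\ge3$ for $n\ge2$. Theorem \ref{thm1.5} then gives $\mathcal{H}^k_{(2)}(X)=\{0\}$ for all $1\le k\le n-1$.

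\textbf{Case $k>n$.} The Hodge star $*$ is an isometry that commutes with $\Delta_d$. It therefore gives an isomorphism
$$\mathcal{H}^{k}_{(2)}(X)\cong\mathcal{H}^{2n-k}_{(2)}(X).$$
For $k>n$ with $k\ne 2n$, we have $1\le 2n-k\le n-1$, so the previous case applies. The remaining degrees are $k=0$ and $k=2n$, which are related by $*$. An $L^2$-harmonic function on a complete manifold is constant by Yau's theorem. A Cartan--Hadamard manifold has infinite volume, so that constant must be $0$.

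\textbf{Main point.} The main point needing care is the pointwise injectivity of $L_\omega$ below the middle degree, which is pure linear algebra. Everything analytic is absorbed into Theorem \ref{thm1.5}.
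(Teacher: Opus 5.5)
Your proposal is correct and follows the paper's own argument. The K\"ahler form is parallel, $L_\omega$ is pointwise injective on $\Lambda^k$ for $k\le n-1$, Theorem \ref{thm1.5} then kills $\mathcal{H}^k_{(2)}(X)$ below the middle degree, and Hodge duality handles $k>n$. Your separate treatment of $k=0$ and $k=2n$ is slightly more careful than the paper, which invokes Theorem \ref{thm1.5} for $0\le k\le n-1$ although that theorem is stated only for $k\geq 1$. For $k=0$ you do not need Yau's theorem: the definition \eqref{2.2} requires $d\alpha=0$, which already forces $\alpha$ to be constant.
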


\section{$L^{2}$-Hodge number}
Let $(X,g)$ be a complete Riemannian manifold, Denote by $\Om^k(X)$ and $\Om^k_{0}(X)$ the spaces of smooth $k$-forms on $X$ and smooth $k$-forms with compact support on $X$, respectively. Let $\langle\cdot,\cdot\rangle$ denote the pointwise inner product on $\Om^{\ast}(X)$ induced by $g$ and duality (cf. \cite{Carron}). The global $L^2$-inner product is defined by
\begin{align}\label{2.1}
(\a,\b)_{L^2(X)}:=\int_{X}\langle\a,\b\rangle d{\rm{Vol}}_{g}.
\end{align}
We define the space $\Om^{k}_{(2)}(X)$ whose elements have locally the following expression:
\begin{align}
\a=\sum_{I=\{i_1<i_2<\cdots<i_k\}}\a_Idx^{i_1}\wedge dx^{i_2}\wedge \cdots\wedge dx^{i_k},
\end{align}
where $\a_I\in L^2_{loc}$, and globally we require
\begin{equation*}
\begin{aligned}
 \|\a\|_{L^{2}(X)}^2:=(\a, \a)_{L^2(X)}=\int_{X}|\a|^{2}d\rm{Vol}_{g}<\infty,
 \end{aligned}
 \end{equation*}
 where $ |\a|^{2}:=\langle\a,\a\rangle$. Note that the space $(\Om^k_{(2)}(X), (\cdot, \cdot)_{L^2(X)})$ is a Hilbert space. The reduced $L^2$-cohomology group $\bar{H}^k_{(2)}(X)$ is defined as the quotient of the space of closed $L^2$ $k$-forms by the closure of the space 
 \begin{align*}
 d\Om^{k-1}_{(2)}(X)\cap \Om^k_{(2)}(X).
 \end{align*}
 
Let $\mathcal{H}^{k}_{(2)}(X)$ be the space of $L^{2}$-harmonic $k$-forms, defined by
\begin{align}\label{2.2}
\mathcal{H}_{(2)}^{k}(X)=\{\a\in\Om^{k}_{(2)}(X)\cap\Om^{k}(X):d\a=0 \text{\ and\ } d^{\ast}\a=0 \},
\end{align}
where $d:\Om^{k}(X)\rightarrow\Om^{k+1}(X)$ is the exterior differential operator, and $d^{\ast}:\Om^{k+1}(X)\rightarrow\Om^{k}(X)$ is its formal adjoint. Note that while the operator $d$ is independent of the metric, the adjoint $d^{\ast}$ depends on $g$.

Throughout the remainder of this article, we assume that $(X,g)$ is a closed  $n$-dimensional manifold with a smooth metric $g$, and $\pi:(\tilde{X},\tilde{g})\rightarrow(X,g)$ is its universal covering with $\Gamma$ as an isometric group of deck transformations. Let $\mathcal{H}^{k}_{(2)}(\tilde{X})$ be the space of $L^{2}$-harmonic $k$-forms on $\Om^{k}_{(2)}(\tilde{X})$, the squared integrable $k$-forms on $(\tilde{X},\tilde{g})$, and denote by $\dim_{\Gamma}\mathcal{H}^{k}_{(2)}(\tilde{X})$ the Von Neumann dimension of $\mathcal{H}^{k}_{(2)}(\tilde{X})$ with respect to $\Gamma$ \cite{Atiyah,Pansu}. 

The precise definition of von Neumann dimension is not essential for our purposes; we shall only need the following two standard facts  (see \cite{Gromov,Pansu}):\\
(1)  $\dim_{\Ga}\mathcal{H}_{(2)}^{k}(X)=0 \Leftrightarrow \mathcal{H}_{(2)}^{k}(X)=\{0\};$\\
(2) $\dim_{\Ga}\mathcal{H}$ is additive: Given $$0\rightarrow\mathcal{H}_{1}\rightarrow\mathcal{H}_{2}\rightarrow \mathcal{H}_{3}\rightarrow 0,$$
one have $$\dim_{\Ga}\mathcal{H}_{2}=\dim_{\Ga}\mathcal{H}_{1}+\dim_{\Ga}\mathcal{H}_{3}.$$
We define the $L^{2}$-Betti numbers of $X$ by
 $$h_{(2)}^{k}(X):=\dim_{\Gamma}\mathcal{H}_{(2)}^{k}(\tilde{X}),\quad 0\leq k\leq n.$$ 
These numbers are independent of the metric $g$ and depend only on the topology of $X$. Moreover, by Atiyah's $L^{2}$-index theorem \cite{Atiyah}, we have the following crucial identities relating $\chi(X)$ to the $L^{2}$-Betti numbers $h_{(2)}^{k}(X)$: $$\chi(X)=\sum_{k=0}^{n}(-1)^{k}h_{(2)}^{k}(X).$$

\section{Non-positively curved Riemannian manifold}
\subsection{An a priori $L^{\infty}$-estimate}
Let $(X,g)$ be a smooth Riemannian manifold. A differential form $\a$ is called $d$(bounded) if there exists a form $\b$ on $X$ such that $\a=d\b$ and 
$$\|\b\|_{L^{\infty}(X)}=\sup_{x\in X}|\b(x)|_{g}<\infty.$$
Clearly, if $X$ is closed, then every exact form is automatically $d$(bounded). However, when $X$ is non-compact, there exist smooth differential forms which are exact but not $d$(bounded). For example, on $\mathbb{R}^{n}$, the volume form $\a=dx^{1}\wedge\cdots\wedge dx^{n}$ is exact, but it is not $d$(bounded). 

We recall the following classical facts due to Gromov \cite{CY,Gromov}.
\begin{theorem}(cf. \cite[Lemma 3.2]{CY})\label{T2}
	Let $(X,g)$ be a complete simply-connected manifold with negative sectional curvature, and let $\a$ be a bounded closed $k$-form on $X$. Then $\a$ is $d$(bounded) for $k\geq2$. Moreover, if
	\[\mathrm{sec}_{g}\leq -K_{0}<0,\] 
	where $K_{0}$ is a positive constant, then
	$$\|\b\|^{2}_{L^{\infty}(X)}\leq K_{0}^{-1}\|\a\|^{2}_{L^{\infty}(X)}.$$
\end{theorem}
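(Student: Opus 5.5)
The plan is to construct an explicit primitive by a radial homotopy (a Poincaré lemma adapted to the exponential map) and then estimate it with Rauch comparison. Fix a base point $x_0\in X$. Since $X$ is complete, simply-connected and $\mathrm{sec}_g<0$, Cartan--Hadamard says $\exp_{x_0}:T_{x_0}X\to X$ is a diffeomorphism. Define the smooth contraction
\[
h_t(x)=\exp_{x_0}\bigl(t\,\exp_{x_0}^{-1}(x)\bigr),\qquad t\in[0,1],
\]
so that $h_1=\mathrm{id}$ and $h_0\equiv x_0$. Let $V_t$ be the velocity field $\partial_t h_t$. At $h_t(x)$ it is the radial vector of length $\rho(x)=d(x_0,x)$. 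Set
\[
\beta=\int_0^1 h_t^{*}\bigl(\iota_{V_t}\alpha\bigr)\,dt .
\]
By the Cartan homotopy formula, $h_1^*\alpha-h_0^*\alpha=d\beta+\int_0^1 h_t^*(\iota_{V_t}d\alpha)\,dt$. Since $d\alpha=0$ and $h_0^*\alpha=0$ (because $k\geq1$ and $h_0$ is constant), we get $\alpha=d\beta$. The form $\beta$ is smooth because $h$ is smooth in $(t,x)$.

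Next I estimate $|\beta(x)|$ pointwise. Evaluate $\beta$ on unit vectors $w_1,\dots,w_{k-1}\in T_xX$. Because $\iota_{V_t}\alpha$ already contains the radial direction, only the components of $dh_t(w_i)$ orthogonal to the radial geodesic contribute; the radial component of $dh_t(w_i)$ is proportional to $V_t$ and is killed. The orthogonal part of $dh_t$ is governed by Jacobi fields $J$ along the geodesic $\gamma$ from $x_0$ to $x$ with $J(0)=0$. Explicitly, $dh_t(J(\rho))=J(t\rho)$. Assuming $\mathrm{sec}_g\le -K_0$, Rauch comparison with the space form of curvature $-K_0$ gives that $s\mapsto |J(s)|/\sinh(\sqrt{K_0}s)$ is nondecreasing. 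Hence the transverse part of $dh_t$ has norm at most $\sinh(\sqrt{K_0}t\rho)/\sinh(\sqrt{K_0}\rho)\le1$. This yields
\[
|\beta(x)|\le \|\alpha\|_{L^\infty}\int_0^1 \rho\Bigl(\tfrac{\sinh(\sqrt{K_0}t\rho)}{\sinh(\sqrt{K_0}\rho)}\Bigr)^{k-1}dt
\le \|\alpha\|_{L^\infty}\int_0^{\rho}\frac{\sinh(\sqrt{K_0}s)}{\sinh(\sqrt{K_0}\rho)}\,ds,
\]
where the second step uses $k-1\ge1$ and the fact that the ratio is at most $1$. The last integral equals $\frac{\cosh(\sqrt{K_0}\rho)-1}{\sqrt{K_0}\sinh(\sqrt{K_0}\rho)}\le K_0^{-1/2}$. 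Squaring gives $\|\beta\|^2_{L^\infty}\le K_0^{-1}\|\alpha\|^2_{L^\infty}$. This is exactly where $k\ge2$ is needed: for $k=1$ no transverse factor is available, and the bound degenerates to $\rho\|\alpha\|_{L^\infty}$.

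The main obstacle is the first claim, which assumes only $\mathrm{sec}_g<0$. Here the comparison above must be run with a curvature bound that is not uniform. My first attempt would be to observe that in the setting relevant to this paper, namely universal covers of closed manifolds, a pointwise negative curvature on the compact quotient automatically gives $\mathrm{sec}_g\le -K_0<0$ on $X$. In that case the first claim reduces to the quantitative one just proved. For a genuinely noncompact $X$ without a uniform bound, I would follow the argument of \cite[Lemma 3.2]{CY} as given. I would use the comparison along each geodesic with the curvature actually available there, and check carefully whether $\int_0^{\rho}|J(s)|^{k-1}/|J(\rho)|^{k-1}\,ds$ stays bounded. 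I expect this to be the delicate point: with only flat comparison the bound grows like $\rho/k$, so some uniform negativity is genuinely required.
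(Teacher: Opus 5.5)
The paper gives no proof of this statement; it only cites \cite[Lemma 3.2]{CY}. Your construction $\beta=\int_0^1\iota_{\partial_t}H^{*}\alpha\,dt$ with $H(t,x)=h_t(x)$, combined with the monotonicity of $|J(s)|/\sinh(\sqrt{K_0}\,s)$, is the standard argument behind Gromov's lemma. It correctly proves the quantitative bound, and hence $d$(boundedness) whenever $\mathrm{sec}_g\le -K_0<0$. This covers in particular universal covers of closed negatively curved manifolds, which is how the paper's introduction phrases Gromov's observation.

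One small repair is needed. Evaluating $\beta$ on unit vectors controls its comass, not the metric norm $|\beta|_g$ that enters $\|\beta\|_{L^\infty(X)}$. To get the metric norm, use two facts. First, $\iota_V\alpha$ has no radial component and $|\iota_V\alpha|\le\rho\,|\alpha|$. Second, for $A_t=dh_t|_{\partial_r^{\perp}}$, the induced map $\Lambda^{k-1}A_t$ has operator norm at most $\|A_t\|^{k-1}$ with respect to the induced inner products. With these, the same integral bounds $|\beta(x)|_g$.

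The real gap is the first sentence of the statement under the pointwise hypothesis $\mathrm{sec}_g<0$ alone. You leave this case open and propose to settle it by following \cite{CY}. No argument can settle it, because as literally stated it is false. Here is a counterexample:
\begin{itemize}
\item On $\mathbb{R}^n$, $n\ge 2$, let $g=dr^2+f(r)^2g_{S^{n-1}}$ with $f(r)=2r-\tanh r$. Since $f$ is odd with $f'(0)=1$, $g$ is smooth and complete.
\item Its curvature operator is diagonal, with eigenvalues $-f''/f$ on radial planes and, for $n\ge3$, $(1-f'^2)/f^2$ on tangential planes. These are negative for $r>0$, because $f''=2\,\mathrm{sech}^2 r\,\tanh r>0$ and $f'=2-\mathrm{sech}^2 r>1$. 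Both tend to $-2$ as $r\to0$.
\item So $\mathrm{sec}_g<0$ everywhere, while $\mathrm{sec}_g\to0$ at infinity.
\item The volume form $\alpha$ is a bounded closed $n$-form. If $\alpha=d\beta$ with $|\beta|\le C$, Stokes' theorem gives $\mathrm{Vol}(B_R)\le C\,\mathrm{Area}(\partial B_R)$.
\item But $\mathrm{Vol}(B_R)/\mathrm{Area}(\partial B_R)=f(R)^{1-n}\int_0^R f^{n-1}\,dr\sim R/n\to\infty$, a contradiction.
\end{itemize}
Hence the theorem has to be read with the uniform bound $\mathrm{sec}_g\le -K_0$, which is exactly the case your argument covers. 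Your remark that flat comparison gives a bound growing like $\rho/k$ only shows that your particular estimate degenerates. It is an example like the one above that shows uniform negativity is genuinely necessary, and it should replace the deferral to \cite{CY}.
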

A differential form $\a$ on a complete Riemannian manifold is called $d$(sublinear) if there exists a differential form $\b$ and a constant $c>0$ such that $\a=d\b$ and
$$|\b(x)|\leq c\big{(}1+\rho(x,x_{0})\big{)},$$
where $\rho(x,x_{0})$ stands for the  Riemannian distance between $x$ and a fixed base point $x_{0}$.
\begin{theorem}(cf. \cite[Theorem 1]{CX} or \cite[Proposition 8.4]{Ball})\label{T4}
Let $(X,g)$ be a complete simply-connected manifold with non-positive sectional curvature, and let $\a$ be a bounded closed $k$-form on $X$. Then $\a$ is $d$(sublinear) for every $k\geq1$, that is, there exists a $(k-1)$-form $\b$ such that $\a=d\b$ and
\begin{align}
|\b(x)|\leq \|\a\|_{L^\infty}(1+\rho(x,x_0)), \quad \forall\ x\in X.
\end{align}
\end{theorem}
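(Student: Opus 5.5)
The plan is to construct $\b$ explicitly by the radial homotopy operator of the Poincar\'e lemma, transported to $X$ by the exponential map. Then we bound it pointwise using Jacobi field comparison in non-positive curvature.

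\textbf{Construction of $\b$.} Fix $x_0\in X$. By the Cartan--Hadamard theorem, $\exp_{x_0}:T_{x_0}X\to X$ is a diffeomorphism. Define the smooth map
$$\Phi:[0,1]\times X\to X,\qquad \Phi_t(x)=\exp_{x_0}\big(t\exp_{x_0}^{-1}(x)\big).$$
Then $\Phi_1=\mathrm{id}$ and $\Phi_0\equiv x_0$ is constant. Set
$$\b:=\int_0^1 \iota_{\partial_t}(\Phi^{*}\a)\,dt .$$
Since $d\a=0$, the homotopy (Cartan) formula gives $d\b=\Phi_1^*\a-\Phi_0^*\a$. Because $k\geq1$, the pullback of a $k$-form by a constant map is zero, so $d\b=\a$. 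Smoothness of $\b$ follows from smoothness of $\Phi$. Concretely, for $v_1,\dots,v_{k-1}\in T_xX$,
$$\b_x(v_1,\dots,v_{k-1})=\int_0^1\a_{\Phi_t(x)}\big(\partial_t\Phi_t(x),\,d\Phi_t(v_1),\dots,d\Phi_t(v_{k-1})\big)\,dt .$$

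\textbf{Pointwise estimate.} Let $\gamma:[0,1]\to X$ be the geodesic from $x_0$ to $x$, so that $\Phi_t(x)=\gamma(t)$. Then $|\partial_t\Phi_t(x)|=|\gamma'(t)|=\rho(x,x_0)$.

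For $v\in T_xX$, differentiating the family of geodesics $s\mapsto\Phi_t(\sigma(s))$, with $\sigma'(0)=v$, shows that $J(t):=d\Phi_t(v)$ is a Jacobi field along $\gamma$ with $J(0)=0$ and $J(1)=v$. The key step is the comparison
$$|J(t)|\leq t\,|J(1)|=t|v|,\qquad t\in[0,1].$$
This holds because in non-positive curvature $t\mapsto|J(t)|$ is convex. Indeed, $\tfrac12(|J|^2)''=|J'|^2-\langle R(J,\gamma')\gamma',J\rangle\geq|J'|^2$, and combined with Cauchy--Schwarz this yields $|J|''\geq0$ wherever $J\neq0$. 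A convex function vanishing at $0$ satisfies $|J(t)|\leq t|J(1)|$. Hence $d\Phi_t$ has operator norm at most $t$ on $T_xX$.

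It follows that the induced map $\Lambda^{k-1}d\Phi_t$ has norm at most $t^{k-1}$ for the inner-product norm on forms. Together with $|\iota_w\eta|\leq|w||\eta|$, this gives
$$|\b(x)|\leq\int_0^1\|\a\|_{L^\infty}\,\rho(x,x_0)\,t^{k-1}\,dt=\frac{\rho(x,x_0)}{k}\|\a\|_{L^\infty}\leq\|\a\|_{L^\infty}\big(1+\rho(x,x_0)\big).$$

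\textbf{Main obstacle.} The step needing the most care is the Jacobi field bound $|d\Phi_t|\leq t$. This is where $\mathrm{sec}_g\leq0$ is used, via Rauch-type convexity of $|J|$. A secondary technical point is that $|J|$ may fail to be smooth where $J=0$. In non-positive curvature, however, $J$ vanishes only at $t=0$, since there are no conjugate points. Alternatively, one can run the argument with $\sqrt{|J|^2+\epsilon}$ and let $\epsilon\to0$. One should also check that the norm estimate for $\Lambda^{k-1}d\Phi_t$ is compatible with the norm convention on forms, which it is for the standard inner-product norm.
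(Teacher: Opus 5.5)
Your proof is correct. The paper gives no proof of its own here: it only cites Cao--Xavier and Ballmann, and your argument is the standard one behind those results, namely the geodesic contraction $\Phi_t(x)=\exp_{x_0}(t\exp_{x_0}^{-1}(x))$, the homotopy formula, and the bound $|d\Phi_t|\le t$ from convexity of $|J|$ in non-positive curvature, which even yields the sharper estimate $|\beta(x)|\le \frac{1}{k}\|\alpha\|_{L^\infty}\,\rho(x,x_0)$.
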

We now give an a priori $L^{\infty}$-estimate for $L^{2}$-harmonic forms on a Cartan--Hadamard manifold. The proof is a standard application of Moser iteration. The key technical ingredient is the following well-known pointwise identity (cf. \cite{Bes,Pet}) :
\begin{equation}\label{E8}
\begin{split}
-\frac{1}{2}\De_{d}|\a|^{2}&=|\na\a|^{2}-\langle\na^{\ast}\na\a,\a\rangle\\
&=|\na\a|^{2}-\langle\De_{d}\a,\a\rangle+\langle\mathrm{Riem}(\a),\a\rangle,\\
\end{split}
\end{equation}
for any $\a\in\Om^{p}(X)$, where $\De_{d}=dd^{\ast}+d^{\ast}d$ is the Hodge Laplacian and $\mathrm{Riem}(\a)$ is the Weitzenböck curvature operator acting on forms.

Another essential ingredient is the Sobolev inequality on Cartan--Hadamard manifolds. For $n\geq 3$, let 
$$C_{n}=\frac{\w_{n-2}^{n-2}}{\w_{n-1}^{n-1}}\left(\int_{0}^{\frac{\pi}{2}}\cos^{\frac{n}{n-2}}(t)\sin^{n-2}(t)dt\right)^{n-2},$$
where $\w_{n}$ denotes the volume of the standard unit sphere $S^{n}$ of $\mathbb{R}^{n+1}$. Croke's result \cite{Cro} can be stated as follows:
\begin{theorem}(\cite[Theorem 8.7]{Heb})\label{T1}
Let  $(X,g)$ be an $n$-dimensional Cartan--Hadamard manifold, $n\geq 3$. For any $v\in C_{0}^{\infty}(X)$, the following Sobolev inequality holds:
\begin{equation}\label{3.3}
\left(\int_{X}|v|^{\frac{n}{n-1}}\right)^{\frac{n-1}{n}}\leq C^{\frac{1}{n}}_{n}\int_{X}|\na v|.
\end{equation}
\end{theorem}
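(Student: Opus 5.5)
We would derive the inequality in Theorem \ref{T1} in two stages. First we establish Croke's isoperimetric inequality on $X$: for every relatively compact domain $\Omega\subset X$ with smooth boundary,
\begin{equation*}
\mathrm{Vol}(\Omega)^{\frac{n-1}{n}}\leq C_{n}^{\frac{1}{n}}\,\mathrm{Area}(\partial\Omega).
\end{equation*}
Then we pass from this isoperimetric inequality to the $L^{1}$-Sobolev inequality \eqref{3.3} by the Federer--Fleming argument, using the coarea formula.

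For the isoperimetric inequality we follow Croke's integral-geometric method.
\begin{enumerate}[(i)]
\item Let $U^{+}\partial\Omega$ be the set of inward unit vectors along $\partial\Omega$. For $u\in U^{+}\partial\Omega$, let $\ell(u)$ be the length of the geodesic $\gamma_u$ until it first leaves $\Omega$.
\item Since $X$ is Cartan--Hadamard, there are no conjugate points. Hence $\gamma_u|_{[0,\ell(u)]}$ is minimizing, and the geodesic flow on $U\Omega$ is parametrized by $(u,t)$ with $0\le t\le\ell(u)$.
\item Liouville's theorem (Santaló's formula) then gives
\begin{equation*}
\omega_{n-1}\mathrm{Vol}(\Omega)=\int_{\partial\Omega}\int_{U^{+}_{x}}\ell(u)\cos\theta(u)\,du\,dx,
\end{equation*}
where $\theta$ is the angle with the inner normal.
\item Applying the same formula to pairs of points (chords) and using the Berger--Kazdan inequality for the Jacobian of the exponential map gives a lower bound involving $\int\ell^{n}\cos\theta$. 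This is possible because there are no conjugate points, so the Jacobian comparison can be integrated over chords.
\item Combining these two formulas with Hölder's inequality, applied in the $u$ variable with weight $\cos\theta$, produces the constant. The resulting $C_{n}$ is the one involving $\int_{0}^{\pi/2}\cos^{\frac{n}{n-2}}t\,\sin^{n-2}t\,dt$ (this is the exponent arising from Hölder).
\end{enumerate}
This step is the main obstacle. The delicate points are the exact bookkeeping of the constant and the justification of Santaló's formula for a possibly non-convex $\Omega$. For the latter, geodesics that re-enter $\Omega$ are handled by counting only the first exit.

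For the second stage, let $v\in C_{0}^{\infty}(X)$. Since $\big|\nabla|v|\big|\leq|\nabla v|$ almost everywhere and $|v|$ is Lipschitz, it suffices to treat $v\geq0$, after a standard approximation of $|v|$ by smooth functions. Set $\Omega_{t}=\{v>t\}$. By Sard's theorem, $\partial\Omega_t=\{v=t\}$ is smooth for almost every $t>0$, so the isoperimetric inequality applies to $\Omega_{t}$. The coarea formula gives
\begin{equation*}
\int_{X}|\nabla v|=\int_{0}^{\infty}\mathrm{Area}(\{v=t\})\,dt.
\end{equation*}
On the other hand, $v=\int_{0}^{\infty}\chi_{\Omega_{t}}\,dt$. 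Minkowski's integral inequality in $L^{\frac{n}{n-1}}$ then yields
\begin{equation*}
\|v\|_{L^{\frac{n}{n-1}}}\leq\int_{0}^{\infty}\mathrm{Vol}(\Omega_{t})^{\frac{n-1}{n}}dt\leq C_{n}^{\frac{1}{n}}\int_{0}^{\infty}\mathrm{Area}(\partial\Omega_{t})\,dt=C_{n}^{\frac{1}{n}}\int_{X}|\nabla v|,
\end{equation*}
which is \eqref{3.3}.
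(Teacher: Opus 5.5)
The paper gives no argument of its own here; it quotes Croke's inequality from Hebey. Your architecture (Croke's isoperimetric inequality, then the Federer--Fleming argument via coarea and Minkowski) is the route behind that citation. Your second stage is fine: for the reduction to $v\geq 0$, $\sqrt{v^2+\epsilon^2}-\epsilon$ is a smooth, compactly supported approximant of $|v|$ with gradient at most $|\nabla v|$.

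The gap is in (iv)--(v), which is exactly where $C_n$ is produced. You never say what the ``lower bound involving $\int\ell^n\cos\theta$'' bounds, and no inequality of that shape yields $C_n$. By scaling, a moment $\int\ell^n(\cdot)\,du\,dx$ cannot be bounded by $\mathrm{Vol}(\Omega)^2$ or $\mathrm{Area}(\partial\Omega)^2$. The natural bound comes from boundary--interior pairs: $\int_{\partial\Omega}\int_{U^+_x}\ell^n\,du\,dx\leq n\,\mathrm{Area}(\partial\Omega)\,\mathrm{Vol}(\Omega)$. Interior pairs instead control $\int\ell^{n+1}$. Running H\"older against Santal\'o's identity then puts the power $1$ (with your weight $\cos\theta$) or $\frac{n}{n-1}$ on $\cos\theta$, never $\frac{n}{n-2}$. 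So the claim in (v) is unsupported. You would get an $L^1$-Sobolev inequality with \emph{some} dimensional constant, which is all the paper uses later, but not the stated inequality with $C_n$. Note that $C_4$ is exactly the sharp Euclidean constant, so there is no slack for a cruder argument.

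The missing idea is to pair \emph{boundary} points. For $x\in\partial\Omega$, send $u\in U^+_x$ to its first exit point $y(u)=\gamma_u(\ell(u))$. Since $\exp_x$ is a diffeomorphism, this map is injective. This is where the Cartan--Hadamard hypothesis really enters; Santal\'o's formula only needs geodesics to leave $\bar\Omega$. The Jacobian of the map is $\ell^{n-1}a(\ell,u)/\cos\theta_y$, where $\theta_y$ is the angle at the exit point. Moreover $a\geq 1$ pointwise by G\"unther's comparison for $\sec_g\leq 0$; this pointwise bound, rather than the averaged Berger--Kazdan inequality, is what fits the weighted integral. With $d\mu=\cos\theta_x\,du\,dx$ this gives
\[
\mathrm{Area}(\partial\Omega)^2\geq\int_{U^+\partial\Omega}\frac{\ell^{n-1}}{\cos\theta_x\cos\theta_y}\,d\mu .
\]
Next, write $\ell=\bigl(\ell^{n-1}/(\cos\theta_x\cos\theta_y)\bigr)^{\frac{1}{n-1}}(\cos\theta_x\cos\theta_y)^{\frac{1}{n-1}}$ and apply H\"older with exponents $n-1$ and $\frac{n-1}{n-2}$ to $\omega_{n-1}\mathrm{Vol}(\Omega)=\int\ell\,d\mu$; this is where $n\geq 3$ is needed. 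Then use $(\cos\theta_x\cos\theta_y)^{\frac{1}{n-2}}\leq\frac12\bigl(\cos^{\frac{2}{n-2}}\theta_x+\cos^{\frac{2}{n-2}}\theta_y\bigr)$. Chord reversal $u\mapsto-\gamma_u'(\ell(u))$ preserves $\mu$ and swaps $\theta_x$ and $\theta_y$. Hence $\int(\cos\theta_x\cos\theta_y)^{\frac{1}{n-2}}d\mu\leq\omega_{n-2}\,\mathrm{Area}(\partial\Omega)\int_0^{\pi/2}\cos^{\frac{n}{n-2}}t\,\sin^{n-2}t\,dt$. Combining gives $\omega_{n-1}^{n-1}\mathrm{Vol}(\Omega)^{n-1}\leq\omega_{n-2}^{n-2}\bigl(\int_0^{\pi/2}\cos^{\frac{n}{n-2}}t\,\sin^{n-2}t\,dt\bigr)^{n-2}\mathrm{Area}(\partial\Omega)^n$. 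That is, $\mathrm{Vol}(\Omega)^{\frac{n-1}{n}}\leq C_n^{\frac1n}\mathrm{Area}(\partial\Omega)$ with exactly the paper's $C_n$, and your second stage then finishes the proof.
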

According to \cite[Lemma 8.1]{Heb}, from \eqref{3.3}  one derives the following standard $L^{2}$-Sobolev inequality:
\begin{equation}\label{eq:sob}
\left( \int_{X} |v|^{2^{\ast}} \right)^{\frac{2}{2^{\ast}}} \leq C_{S}\int_{X} |\nabla v|^2
\end{equation}
for all $v \in C_0^\infty(X)$, where $2^{\ast}=\frac{2n}{n-2}$ and 
$$C_{S}=\frac{4(n-1)^{2}}{(n-2)^{2}}C_{n}^{\frac{2}{n} }.$$
Note that \eqref{eq:sob} still holds for all $v\in W^{1,2}_c(X)=\{v\in W^{1,2}(X): \text{supp}\ {v}\ \text{is compact in } X\}$, since for any $v\in W^{1,2}_c(X)$, there exists a sequence $\{v_k\}\subset C^\infty_0(X)$ such that $v_k\rightarrow v$ in $W^{1,2}(X)$ as $k\rightarrow +\infty$, and $\text{supp}\ v_k$ is contained in some relatively compact neighborhood of $\text{supp}\ v$.
\begin{proposition}\label{P1}
Let  $(X,g)$ be an $n$-dimensional Cartan--Hadamard manifold, $n\geq 3$. Suppose that the sectional curvature satisfies 
	$$-K\leq\mathrm{sec}_{g}\leq 0.$$
Then for any $L^{2}$ harmonic $k$-form $\a$ with $k\geq 1$ (i.e. $\a\in\mathcal{H}^{k}_{(2)}(X)$), we have
	\begin{equation}\label{E6}
	\|\a\|^{2}_{L^{\infty}(X)}\leq C(n)K^{\frac{n}{2}}\|\a\|^{2}_{L^{2}(X)},
	\end{equation}
where $C(n)$ is a constant depending only on the dimension $n$. Furthermore, there exists a $(k-1)$-form $\b$ on $X$ such that $\a=d\b$ and
	\begin{equation}\label{E7}
	|\b(x)|^{2}\leq C(n)K^{\frac{n}{2}}\|\a\|^{2}_{L^{2}(X)}\left(1+\rho(x,x_{0})\right)^{2}.
	\end{equation}
\end{proposition}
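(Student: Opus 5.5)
We first derive a subsolution inequality for $u=|\a|$ and then run Moser iteration with the global Sobolev inequality \eqref{eq:sob}. Since $\a$ is harmonic, $\De_d\a=0$, so \eqref{E8} gives
\[
-\tfrac12\De_d|\a|^2=|\na\a|^2+\langle\mathrm{Riem}(\a),\a\rangle .
\]
The Weitzenböck term on $k$-forms is a linear combination of curvature components. Under $-K\le\mathrm{sec}_g\le0$ all curvature components are bounded by a multiple of $K$, so $|\langle\mathrm{Riem}(\a),\a\rangle|\le c(n)K|\a|^2$ with $c(n)$ depending only on $n$ (and $k\le n$). The Kato inequality $|\na|\a||\le|\na\a|$ then gives, in the weak sense with the sign convention $\De_d=-\mathrm{div}\,\na$ on functions,
\[
\Delta u\ge -c(n)K\,u,\qquad u=|\a| .
\]
The set $\{u=0\}$ can be handled by working with $u_\epsilon=\sqrt{|\a|^2+\epsilon}$ and letting $\epsilon\to0$.

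Next comes the Moser iteration. Fix $x\in X$ and the ball $B_r(x)$ with $r=K^{-1/2}$. Multiply the inequality by $\eta^2u^{2\gamma-1}$, where $\gamma\ge1$ and $\eta$ is a cutoff supported in $B_{r}$. Integrating by parts gives
\[
\int|\na(\eta u^\gamma)|^2\le C\gamma\int\big(|\na\eta|^2+K\eta^2\big)u^{2\gamma}.
\]
Apply \eqref{eq:sob} to $\eta u^\gamma\in W^{1,2}_c(X)$. The key point is that the Sobolev constant $C_S$ depends only on $n$, thanks to Croke's inequality (Theorem \ref{T1}); no volume lower bound is needed. Take radii $r_j=r(1+2^{-j})/2$, exponents $\gamma_j=(n/(n-2))^j$ and $|\na\eta_j|\le C2^j/r$. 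Since $K\le r^{-2}$, each step reads
\[
\|u\|_{L^{2\gamma_{j+1}}(B_{r_{j+1}})}\le \big(C\gamma_j4^j r^{-2}\big)^{1/(2\gamma_j)}\|u\|_{L^{2\gamma_j}(B_{r_j})}.
\]
Iterating, the product of the constants converges to $C(n)\,r^{-2\sum_j 1/(2\gamma_j)}=C(n)\,r^{-n/2}$, because $\sum_j\gamma_j^{-1}=n/2$. This yields $u(x)^2\le C(n)r^{-n}\|u\|^2_{L^2(B_r)}\le C(n)K^{n/2}\|\a\|^2_{L^2}$, which is \eqref{E6}.

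For \eqref{E7}, note that $\a$ is closed and, by \eqref{E6}, bounded. Theorem \ref{T4} applies on the Cartan--Hadamard manifold and gives $\b$ with $\a=d\b$ and $|\b(x)|\le\|\a\|_{L^\infty}(1+\rho(x,x_0))$. Squaring and inserting \eqref{E6} gives \eqref{E7}.

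The main obstacle is bookkeeping the exponent of $K$ in the iteration: the cutoff scale must be tied to $r=K^{-1/2}$ so that the curvature term and the gradient term have the same size. The iteration then works precisely because the Sobolev constant is scale-free, which is what produces the factor $K^{n/2}$. A secondary point is justifying the Kato step across $\{\a=0\}$, which the $\epsilon$-regularization handles.
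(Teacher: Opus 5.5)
Your proposal is correct and follows the paper's proof. Both use the Weitzenböck formula plus Kato's inequality, with the regularization $u_\epsilon=\sqrt{|\alpha|^2+\epsilon}$, to obtain a weak subsolution inequality. Both then run a Moser iteration driven by Croke's Sobolev constant, which depends only on $n$, and finish with Theorem~\ref{T4} to produce the sublinear primitive $\beta$.

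The only difference is the localization. The paper iterates between $B_R$ and $B_{2R}$ about a fixed base point and lets $R\to\infty$ to remove the cutoff term. You instead iterate on nested balls of radius comparable to $K^{-1/2}$ about each point. This chains the $L^{2\gamma_j}$ norms correctly, whereas the paper's fixed pair $B_R\subset B_{2R}$ needs nested radii before it literally iterates. Your version also gives the sharper local bound $|\alpha(x)|^2\le C(n)K^{n/2}\|\alpha\|^2_{L^2(B_{K^{-1/2}}(x))}$.
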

\begin{proof}
	The proof follows the standard Moser iteration scheme. We first derive a differential inequality for $u:=|\a|$ from the Weitzenb\"ock formula, then apply a cutoff argument together with the Sobolev inequality on Cartan--Hadamard manifolds.
	
	Since the Riemannian curvature tensor is completely determined by the sectional curvature (cf. \cite[Prop. 2.49]{AH}), under the curvature assumption $-K\leq \sec_{g} \leq 0$, there exists a constant $C_{0}=C_{0}(n,k)>0$ such that the Weitzenböck curvature term satisfies
	\begin{align}\label{3.6.}
	\langle \mathrm{Riem}(\alpha), \alpha\rangle \geq -C_{0}K|\a|^{2}.
	\end{align}
	Since $\alpha$ is harmonic, i.e., $\Delta_d \alpha = 0$, the Weitzenb\"ock formula \eqref{E8} gives
	\begin{equation}\label{eq:weit}
	\begin{aligned}
	-\frac{1}{2} \Delta_{d}(|\a|^2) &= |\nabla \alpha|^2 + \langle \mathrm{Riem}(\alpha), \alpha\rangle\\
	&\geq |\nabla \alpha|^2-C_0K|\a|^2.
	\end{aligned}
	\end{equation}
	From \eqref{eq:weit} and Kato's inequality, it follows that
	\begin{equation}
	\begin{aligned}
	-|\a|\Delta_d |\a|&\geq-|\nabla |\a||^2+|\nabla \alpha|^2-C_0K|\a|^2\\
	&\geq -C_0K|\a|^2 \quad \text{in } X\setminus \{x\in X: \ |\a|=0 \}.
	\end{aligned}
	\end{equation}
	Set $u=|\alpha|$. We have
	\begin{equation}\label{eq:diffineq}
	-\Delta_{d}u\geq -C_{0}Ku \quad \text{in } X\setminus \{x\in X: \ u=|\a|=0 \}.
	\end{equation}
Although $u=|\a|$ may fail to be smooth at points where $u=|\a|=0$, the differential inequality \eqref{eq:diffineq} holds in the weak sense on all of $X$, namely,
	\begin{align}\label{3.10.}
	-\int_X \nabla u\cdot \nabla \varphi\geq -C_0K\int_X u\varphi
	\end{align}
	for every nonnegative test function $\varphi\in W^{1,2}_c(X)=\{f\in W^{1,2}(X): \text{supp} {f}\ \text{is compact in } X\}$ with $\varphi\geq 0$. Note that $u=|\a|$ is a locally Lipschitz function on $X$, and hence $u\in W^{1,\infty}_{loc}(X)$. 
	
Firstly, we prove that \eqref{3.10.} holds for any $\varphi\in C^\infty_0(X)$ with $\varphi\geq 0$. Let $u_\epsilon=\sqrt{u^2+\epsilon}$, where $\epsilon\in (0, 1)$. Then $u_\epsilon\in C^\infty(X)$ and $u_\epsilon$ converges to $u$ uniformly on any compact subset of $X$ as $\epsilon\rightarrow 0+$. By \eqref{eq:weit}, we have
	
	\begin{equation}\label{3.11..}
	\begin{aligned}
	-\Delta_d u_\epsilon &= \frac{-2(u^2+\epsilon)\Delta u^2-|\nabla u^2|^2}{4(u^2+\epsilon)^{\frac{3}{2}}}\\
	&\geq \frac{4(u^2+\epsilon)|\nabla \a|^2-|\nabla u^2|^2-4C_0Ku^2(u^2+\epsilon)}{4(u^2+\epsilon)^{\frac{3}{2}}}\\
	&\geq -C_0K\frac{u^2}{\sqrt{u^2+\epsilon}}\\
	&\geq -C_0Ku,
	\end{aligned}
	\end{equation}
	where we have used the Kato's inequality $4|\a|^2|\nabla \a|^2\geq |\nabla |\a|^2|^2$ on $X$, and $0\leq\frac{u}{\sqrt{u^2+\epsilon}}<1$. Multiplying \eqref{3.11..} by the text function $\varphi\in C^\infty_0(X)$ with $\varphi\geq 0$ yields
	\begin{align}\label{3.13}
	-C_0K\int_X u\varphi \leq -\int_X \varphi\Delta_d u_\epsilon =-\int_X u_\epsilon \Delta_d \varphi . 
	\end{align}
	Since
	\begin{equation}
	\begin{aligned}
	|\int_X (u_\epsilon-u) \Delta_d \varphi| &\leq \int_{supp \varphi} |u_\epsilon-u|| \Delta_d \varphi|\\
	&\leq C\sup_{supp \varphi}|u_\epsilon-u|= C \sup_{supp \varphi}\frac{\epsilon}{\sqrt{u^2+\epsilon}+u}\leq C\epsilon^{\frac{1}{2}} \rightarrow 0,
	\end{aligned}
	\end{equation}
     as $\epsilon\rightarrow 0+$, it follows from \eqref{3.13} that
	\begin{align}\label{3.13...}
	-C_0K\int_X u\varphi \leq -\int_X u \Delta_d \varphi=-\int_X \nabla u\cdot \nabla \varphi,
	\end{align}
	which finishes the proof of \eqref{3.10.} for any $\varphi\in C^\infty_0(X)$ with $\varphi\geq 0$. Furthermore, \eqref{3.10.} still holds for all $\varphi\in W^{1,2}_c(X)$ with $\varphi\geq 0$, because for any $\varphi\in W^{1,2}_c(X)$ with $\varphi\geq 0$, there exists a sequence $\{\varphi_k\}\subset C^\infty_0(X)$ such that $\varphi_k\geq 0$, $\varphi_k\rightarrow \varphi$ in $W^{1,2}(X)$ as $k\rightarrow +\infty$, and $\text{supp}\ \varphi_k$ is contained in some relatively compact neighborhood of $\text{supp}\ \varphi$. This follows from the standard mollification argument using local coordinates and a partition of unity.
	
	Fix a base point $x_{0}\in X$ and let $B_r$ denote the geodesic ball of radius $r$ centered at $x_{0}$. For a given $R > 0$, choose a smooth cutoff function $\eta \in C_0^\infty(B_{2R})$ satisfying
	\[
	0 \leq \eta \leq 1,\qquad \eta \equiv 1 \text{ on } B_R,\qquad |\nabla \eta| \leq \frac{C_1}{R},
	\]
	where $C_1>0$ is a universal constant.
	
	For any $p \geq 1$, taking the test function $\varphi=\eta^2 u^{2p-1} \in W^{1,2}_c(X)$ in \eqref{3.10.} gives
	\[
	 \int_{X} \nabla(\eta^2 u^{2p-1})\cdot \nabla u \leq C_0 K \int_X \eta^2 u^{2p}.
	\]
	The left-hand side expands as
	\begin{equation*}
	\begin{split}
	 \int_{X} \nabla(\eta^2 u^{2p-1}) \cdot \nabla u= \int_{X} \bigl( 2\eta \nabla\eta \cdot \nabla u \cdot u^{2p-1} + (2p-1) \eta^2 u^{2p-2} |\nabla u|^2 \bigr).
	\end{split}
	\end{equation*}
	Hence,
	\begin{equation}\label{eq:step1}
	(2p-1) \int_X \eta^2 u^{2p-2} |\nabla u|^2
	\leq C_0 K \int_X \eta^2 u^{2p} - 2 \int_X \eta u^{2p-1} \nabla \eta \cdot \nabla u.
	\end{equation}
	By Cauchy inequality, for any $\varepsilon>0$,
	\[
	2\bigl| \eta u^{2p-1} \nabla \eta \cdot \nabla u \bigr|
	\leq \varepsilon \eta^2 u^{2p-2} |\nabla u|^2 + \frac{1}{\varepsilon} u^{2p} |\nabla \eta|^2.
	\]
	Choosing $\varepsilon = p-\frac{1}{2}>0$ and substituting the above inequality into \eqref{eq:step1} gives
	\begin{equation}\label{eq:step2}
	(p-\frac{1}{2}) \int_X \eta^2 u^{2p-2} |\nabla u|^2
	\leq C_0 K \int_X \eta^2 u^{2p} + \frac{1}{p-\frac{1}{2}} \int_X u^{2p} |\nabla \eta|^2.
	\end{equation}
	Set $w = u^p$. Then $|\nabla w|^2 = p^2 u^{2p-2} |\nabla u|^2$, and
	\[
	|\nabla (\eta w)|^2 \leq 2 |\nabla \eta|^2 w^2 + 2 \eta^2 |\nabla w|^2.
	\]
	Multiplying \eqref{eq:step2} by $\frac{p^2}{p-\frac{1}{2}}$ ($p\geq 1$) yields
	\begin{equation}
	\begin{aligned}
	\int_X \eta^2 |\nabla w|^2 &\leq \frac{p^2}{p-\frac{1}{2}} C_0 K \int_X \eta^2 w^2 + \frac{p^2}{(p-\frac{1}{2})^2} \int_X w^2 |\nabla \eta|^2\\
	&\leq 2p^2  C_0 K \int_X \eta^2 w^2 + \frac{p^2}{(p-\frac{1}{2})^2} \int_X w^2 |\nabla \eta|^2.
	\end{aligned}
	\end{equation}
	Consequently,
	\begin{equation}\label{eq:gradw}
	\begin{aligned}
	\int_X |\nabla (\eta w)|^2&\leq 2 \int_X |\nabla \eta|^2 w^2+2\int_X \eta^2 |\nabla w|^2\\
	&\leq 2 \int_X |\nabla \eta|^2 w^2 + 4 p^2 C_0 K \int_X \eta^2 w^2 + \frac{2p^2}{(p-\frac{1}{2})^2} \int_M w^2 |\nabla \eta|^2.
		\end{aligned}
	\end{equation}
	Applying the Sobolev inequality \eqref{eq:sob} to $\eta w\in W^{1,2}_c(X)$ and using \eqref{eq:gradw} and $\eta|_{B_R}\equiv 1$, we obtain
	\begin{equation}\label{eq:iter}
	\left( \int_{B_R} w^{2^*} \right)^{2/2^*}
	\leq \left( \int_{X} (\eta w)^{2^*} \right)^{2/2^*}
	\leq C_S \left[ 4p^2 C_{0}K \int_{X} \eta^2 w^2 + C_2(p) \int_{X} |\nabla \eta|^2 w^2 \right],
	\end{equation}
	where $C_2(p) = 2 + \frac{2p^2}{(p-\frac{1}{2})^2} \leq 10$ for all $p \geq 1$. Using $|\nabla \eta| \leq C_1/R$ and the support properties of $\eta$, we obtain
	\[
	\| w \|_{L^{2^*}(B_R)}^2
	\leq C_S \left( 4 p^2 C_{0}K+ \frac{10 C_1^2}{R^2} \right) \| w \|_{L^2(B_{2R})}^2,
	\]
	where $w=u^p$ for $p\geq 1$. Now let $\nu = 2^*/2 = n/(n-2) > 1$ and define $p_j = \nu^j$ for $j = 0, 1, 2, \dots$. Applying the above inequality with $p = p_j$ iteratively, and using
	$$\| w \|_{L^{2^*}(B_R)} = \| u \|_{L^{2 p_{j+1}}(B_R)}^{p_j},\quad \|w\|_{L^2(B_{2R})}=\|u\|_{L^{2p_j}(B_{2R})}^{p_j},$$
	we get
	\[
	\| u \|_{L^{2 p_{j+1}}(B_R)} \leq
	\left[ C_S \left( 4 \nu^{2j} C_{0}K + \frac{10 C_1^2}{R^2} \right) \right]^{1/2\nu^j}
	\| u \|_{L^{2 p_j}(B_{2R})}.
	\]
	Iterating from $j=0$ and letting $j \to \infty$,
	\begin{align*}
	\| u \|_{L^\infty(B_R)}
	&\leq \prod_{j=0}^{\infty} \left[ C_S \left( 4 \nu^{2j} C_{0}K+ \frac{10 C_1^2}{R^2} \right) \right]^{1/2\nu^j}
	\| u \|_{L^2(B_{2R})}\\
	&= \prod_{j=0}^{\infty}\left(C_S 4 \nu^{2j} C_{0}K\right)^{1/2\nu^j} \prod_{j=0}^{\infty} \left(1+ \frac{5 C_1^2}{2C_{S}C_{0}KR^{2}\nu^{2j}} \right)^{1/2\nu^j}
	\| u \|_{L^2(B_{2R})}\\
	&=(4C_{S}C_{0}K)^{\frac{n}{4}}\prod_{j=0}^{\infty} \nu^{j/\nu^j} \prod_{j=0}^{\infty} \left(1+ \frac{5 C_1^2}{2C_{S}C_{0}KR^{2}\nu^{2j}} \right)^{1/2\nu^j}
	\| u \|_{L^2(B_{2R})}
	\end{align*}
The product is estimated by taking logarithms and applying the inequality
$$\ln(1+x)^{\frac{1}{2}}\leq\ln(1+x^{\frac{1}{2}})\leq x^{\frac{1}{2}}.$$
This yields
	\begin{equation*}
	\begin{split}
	\ln\bigg{(}\prod_{j=0}^{\infty} \nu^{j/\nu^j} \prod_{j=0}^{\infty} \left(1+ \frac{5 C_1^2}{2C_{S}C_{0}KR^{2}\nu^{2j}} \right)^{1/2\nu^j}\bigg{)}&\leq \left(\sum_{j=0}^{\infty}\frac{j}{\nu^{j}}\right)\ln\nu+
\sum_{j=0}^{\infty} \frac{1}{\nu^j} \ln \left(1+ \frac{5 C_1^2}{2C_{S}C_{0}KR^{2}\nu^{2j}}\right)^{1/2} \\
&\leq \frac{\nu}{(\nu-1)^{2}}\ln\nu+
\sum_{j=0}^{\infty} \frac{1}{\nu^j} \ln \left(1+\big{(}\frac{5 C_1^2}{2C_{S}C_{0}KR^{2}\nu^{2j}}\big{)}^{1/2}\right) \\
&\leq \frac{\nu}{(\nu-1)^{2}}\ln\nu+
\big{(}\frac{5 C_1^2}{2C_{S}C_{0}KR^{2}}\big{)}^{1/2}\sum_{j=0}^{\infty}\nu^{-2j} \\
&\leq \frac{\nu}{(\nu-1)^{2}}\ln\nu+
\big{(}\frac{5 C_1^2}{2C_{S}C_{0}KR^{2}}\big{)}^{1/2}\frac{\nu^2}{\nu^2-1}. 
	\end{split}
	\end{equation*}
Consequently, we obtain 
	\[
	\| u \|_{L^\infty(B_R)} \leq C_{2}(n)K^{\frac{n}{4}}e^{C_{3}(n)\frac{C_{1}}{\sqrt{K}R} }\| u \|_{L^2(B_{2R})}.
	\]
Since $X$ is a complete (non-compact) Cartan--Hadamard manifold, we may let 
	$R \to \infty$. Then $\|u\|_{L^2(B_{2R})} \to \|u\|_{L^2(X)}$, 
	and the exponential factor satisfies $e^{C_{3}(n)\frac{C_{1}}{\sqrt{K}R} }\rightarrow1$. By redefining the constant $C(n)$, we arrive at the desired global uniform estimate (\ref{E6}).  
	
Finally, the existence of $\b$ with the bound (\ref{E7}) follows from Theorem \ref{T4} and the $L^{\infty}$-bound on $\a$. This completes the proof.
\end{proof}

\subsection{Proofs of Main Theorems}
\begin{proof}[\textbf{Proof of Theorem \ref{T3}}]
Assume that $\a$ is an $L^{2}$-harmonic $k$-form on $X$ and that $\a\in L^{1}(X)$. We wish to show that $\a=0$	
	
By Proposition \ref{P1}, there exists a $(k-1)$-form $\b$ with $\a=d\b$ and
	$\b$ is sublinear. Let $\eta:\mathbb{R}\rightarrow\mathbb{R}$ be smooth, $0\leq\eta\leq1$,
	$$
	\eta(t)=\left\{
	\begin{aligned}
	1, &  & t\leq0, \\
	0,  &  & t\geq1,
	\end{aligned}
	\right.
	$$
	and consider the compactly supported function
	\begin{align}\label{3.21-}
	f_{j}(x)=\eta(\rho(x_{0},x)-j),
		\end{align}
	where $j$ is a positive integer.
	
Noticing  that $f_{j}\a$ has compact support and $d^{\ast}\a=0$, we compute the $L^{2}$-inner product:
	\begin{equation}\label{E1}
	\begin{split}
	( \a,f_{j}\a)_{L^{2}(X)}&=( d\b, f_{j}\a)_{L^{2}(X)}\\
	&=( \b, d^{\ast}(f_{j}\a))_{L^{2}(X)}\\
	&=(\b, \pm\ast (df_j\wedge\ast\a))_{L^{2}(X)},
	\end{split}
	\end{equation}
	where $(\cdot, \cdot)_{L^{2}(X)}$ is the global $L^2$-inner product defined in \eqref{2.1}. Since $0\leq f_{j}\leq 1$ and 
	$$\lim_{j\rightarrow\infty}\langle \a, f_{j}\a\rangle (x)=|\a|^2(x),\quad \forall\ x\in X,$$ 
it follows from the dominated convergence theorem that
	\begin{equation}\label{E2}
	\lim_{j\rightarrow\infty}( \a,f_{j}\a)_{L^{2}(X)}=\|\a\|^{2}_{L^{2}(X)}.
	\end{equation}
On the other hand, by $\mathrm{supp}(df_{j})\subset B_{j+1}\backslash B_{j}$ and $$|\b(x)|=O(\rho(x_{0},x))\leq c(j+1),\ \forall\ x\in B_{j+1}\backslash B_{j},$$ we obtain the estimate:
	\begin{equation}\label{E3}
	|(\b, \ast(df_j\wedge\ast\a))_{L^{2}(X)}|\leq c(j+1)\int_{B_{j+1}\backslash B_{j}}|\a|d\rm{Vol}_g,
	\end{equation}
	where $c$ is a constant independent of $j$.
	
	We claim that there exists a subsequence $\{j_{i}\}_{i\geq1}$ such that
	\begin{equation}\label{E4}
	\lim_{i\rightarrow\infty}(j_{i}+1)\int_{B_{j_{i+1}}\backslash B_{j_{i}}}|\a|d\text{Vol}_g=0.
	\end{equation}
	If not, there exists a positive constant $a>0$ such that
	$$(j+1)\int_{B_{j+1}\backslash B_{j}}|\a|d\text{Vol}_g\geq a>0,\quad j\geq 1.$$
	This inequality implies
	\begin{equation}\nonumber
	\begin{split}
	\int_{X}|\a|d\rm{Vol}_g&=\sum_{j=0}^{\infty}\int_{B_{j+1}\backslash B_{j}}|\a|d\text{Vol}_g\\
	&\geq a\sum_{j=1}^{\infty}\frac{1}{j+1}\\
	&=+\infty,
	\end{split}
	\end{equation}
	which is a contradiction to the assumption $\int_{X}|\a|d\text{Vol}_g<\infty$. Hence, there exists a subsequence $\{j_{i}\}_{i\geq1}$ for which (\ref{E4}) holds. Using (\ref{E3}) and (\ref{E4}), one obtains
	\begin{equation}\label{E5}
	\lim_{i\rightarrow\infty}(\b, \ast(df_{j_i}\wedge\ast\a))_{L^{2}(X)}=0
	\end{equation}
Combining (\ref{E1}), (\ref{E2}) and (\ref{E5}), we conclude that $\|\a\|^{2}_{L^{2}(X)}=0$, and thus $\a=0$. 
\end{proof}

\begin{proof}[\textbf{Proof of Theorem \ref{C2}}]
Since $\a$ is $L^{2}$-harmonic $p$-form on $X$ (i.e., $\a\in \mathcal{H}_{(2)}^{p}(X)$), Proposition \ref{P1} implies both that $\|\a\|_{L^{\infty}(X)}<+\infty$ and that there exists a $(p-1)$-form $\eta_1$ such that $\a=d\eta_1$ and $\eta_1$ satisfies
\begin{align}\label{3.27..}
|\eta_1(x)|\leq c(1+\rho(x_0, x)),\quad \forall\ x\in X,
\end{align}
for some constant $c>0$. As $\beta\in \mathcal{H}_{(2)}^{q}(X)$ and $\gamma \in \mathcal{H}_{(2)}^{p+q}(X)$, we have $d\b=0$ and $d\ast\gamma=(-1)^{1-(p+q-1)^2}\ast d^* \gamma=0$. Hence,
\[\a\wedge\b\wedge\ast\gamma=d(\eta_	1\wedge\b\wedge\ast\gamma).\]
Let $f_j$ be the compactly supported function defined in \eqref{3.21-}, the Stokes' formula gives
\begin{equation}
\begin{split}
\int_{X}\langle f_{j}\a\wedge\b,\gamma\rangle&=\int_{X}f_{j}\a\wedge\b\wedge\ast\gamma\\
&=\int_{X}d(f_{j}\eta_1\wedge\b\wedge\ast\gamma)-\int_Xdf_{j}\wedge\eta_1\wedge\b\wedge\ast\gamma\\
&=-\int_{X}df_{j}\wedge\eta_1\wedge\b\wedge\ast\gamma.
\end{split}
\end{equation}

Since $\a$ and $\b$ are  $L^{2}$-harmonic, Proposition \ref{P1} implies that both are bounded on $X$. Consequently,
$$\|\a\wedge\b\|_{L^{2}(X)}\leq \|\a\|_{L^{\infty}(X)}\|\b\|_{L^{2}(X)}<+\infty.$$
Hence, for any $\gamma \in \mathcal{H}_{(2)}^{p+q}(X)$,
\begin{equation}\label{3.28.}
\int_{X}\left|\langle\a\wedge\b,\gamma\rangle\right|\leq \|\a\wedge\b\|_{L^{2}(X)}\|\gamma\|_{L^{2}(X)}<+\infty.
\end{equation}
By \eqref{3.28.}, $0\leq f_{j}\leq 1$ and $\lim_{j\rightarrow +\infty}\langle f_{j}\a\wedge\b,\gamma\rangle(x)=\langle \a\wedge\b,\gamma\rangle(x)$ for any $x\in X$, it follows from the dominated convergence theorem that
\begin{align}
\lim_{j\rightarrow \infty}\int_{X}\langle f_{j}\a\wedge\b,\gamma\rangle=\int_X\langle \a\wedge\b,\gamma\rangle.
\end{align}
On the other hand, using $\mathrm{supp}(df_{j})\subset B_{j+1}\backslash B_{j}$ and \eqref{3.27..}, we obtain the estimate:
\begin{align}
\left|\int_{X}df_{j}\wedge\eta_1\wedge\b\wedge\ast\gamma\right|\leq  c(j+1)\int_{B_{j+1}\backslash B_{j}}|\b\wedge\ast\gamma|,
\end{align}
where $c$ is a constant independent of $j$. Noting that $\b\wedge\ast\gamma\in L^1$, since
\[\|\b\wedge\ast\gamma\|_{L^{1}(X)}\leq \|\b\|_{L^{2}(X)}\|\gamma\|_{L^{2}(X)}<+\infty.\]
Applying the same argument as in the proof of Theorem \ref{T3}, we obtain
\begin{align}\label{3.31--}
\int_{X}\langle \a\wedge\b,\gamma\rangle=0
\end{align}
for any $\a\in\mathcal{H}^{p}_{(2)}(X)$, $\b\in\mathcal{H}^{q}_{(2)}(X)$ and $\gamma\in\mathcal{H}^{p+q}_{(2)}(X)$ with $p,q\geq1$.

Now consider the Hodge--de Rham orthogonal decomposition on $\Om^{k}_{(2)}(X)$ (cf. \cite{Carron}):
\begin{align}\label{3.23..}
\Om^{k}_{(2)}(X)=\mathcal{H}^{k}_{(2)}(X)\oplus\overline{d\Om^{k-1}_0(X)}\oplus\overline{d^{\ast}\Om^{k+1}_0(X)},
\end{align}
where $\Om^k_0(X)$ denotes the space of smooth $k$-form with compact support on $X$, and the closures are taken for the $L^{2}$-topology. Applying this orthogonal decomposition to $\alpha \wedge \beta$ and taking $\gamma=[\a\wedge\b]_{h}$ in \eqref{3.31--}, where $[\a\wedge \b]_{h}$ denotes the harmonic component of $\alpha \wedge \beta$, we get
\[\int_{X}\langle\a\wedge\b,[\a\wedge\b]_{h}\rangle=\|[\a\wedge\b]_{h}\|^{2}_{L^{2}(X)}=0.\]
Hence $[\a\wedge\b]_{h}=0$. Furthermore, since $d(\a\wedge\b)=0$, we have $(\a\wedge \b)\perp \overline{d^{\ast}\Om^{p+q+1}_0(X)}$. Thus, by \eqref{3.23..}, $\a\wedge\b$ has the following decomposition:
\begin{align}
\a\wedge\b=[\a\wedge\b]_{h}+\eta_2=\eta_2\in  \overline{d\Om^{p+q-1}_0(X)},
\end{align}
which lies in the closure of $d\Om^{p+q-1}_{(2)}(X)\cap \Om^{p+q}_{(2)}(X)$. Therefore, the reduced $L^2$-cohomology class $[\alpha \wedge \beta]$ is zero in $\bar{H}^{p+q}_{(2)}(X)$.
\end{proof}

\begin{proof}[\textbf{Proof of Proposition \ref{P2}}]
	($\Rightarrow$) If $h_{(2)}^k(X)=0$, then by the properties of von Neumann dimension,  $\mathcal{H}_{(2)}^k(\tilde X)=\{0\}$. The only $L^2$-harmonic $k$-form is the zero form, which is trivially in $L^1$.
	
	($\Leftarrow$) Since $\pi$ is a local isometry, the sectional curvature of metric $\tilde{g}$ on $\tilde{X}$ satisfies 
	\[-K\leq \sec_{\tilde{g}}\leq0.\]
	 Suppose that every $L^2$-harmonic $k$-form on $\tilde X$ belongs to  $L^1$. If $h_{(2)}^k(X)\neq 0$, there exists a non-zero $\alpha\in\mathcal{H}_{(2)}^k(\tilde X)$. By hypothesis $\alpha\in L^1$, and Theorem \ref{T3} forces $\alpha\equiv 0$, a contradiction. Hence $h_{(2)}^k(X)=0$.
\end{proof}

\begin{proof}[\textbf{Proof of Theorem \ref{thm1.5}}] Since $\omega$ is a nonzero parallel $l$-form, i.e., $\nabla \omega=0$, we obtain that 
\begin{equation}\label{3.27}
\nabla |\omega|^2=2\langle \nabla \omega, \omega\rangle=0\ \Rightarrow |\omega|=C,
\end{equation}
where $C>0$ is a constant. By \eqref{3.27} and Theorem \ref{T4}, there exists an $(l-1)$-form $\beta$ such that $\omega=d\beta$ and 
\begin{align}
|\beta(x)|\leq C(1+\rho(x, x_0)), \quad \forall\ x\in X.
\end{align}
Since $$[\De_{d},L_{\w}]=0$$
and $\w$ is bounded, for any $L^2$-harmonic $k$-form $\a$, the form $\omega\wedge \a$ is again $L^2$-harmonic  (cf. \cite[Cor. 2.9]{Ver}). In particular, 
$$d^*(\omega\wedge \a)=0.$$
Therefore,
\begin{equation}\label{3.30}
\begin{aligned}
0&=(d^*(\omega\wedge \a), f_j\beta\wedge \a)_{L^2(X)}\\
&=(\omega\wedge \a, d(f_j\beta\wedge \a))_{L^2(X)}\\
&=(\omega\wedge \a, f_j\omega\wedge \a)_{L^2(X)}+(\omega\wedge \a, df_j\wedge\beta\wedge \a)_{L^2(X)},
\end{aligned}
\end{equation}
where $f_j$ is the compactly supported function defined in \eqref{3.21-}. Since $0\leq f_{j}\leq 1$ and $\lim_{j\rightarrow\infty}f_{j}(x)(\omega\wedge \a)(x)=(\omega\wedge \a)(x)$,  it follows from the dominated convergence theorem that
	\begin{equation}\label{3.31}
	\lim_{j\rightarrow\infty} (\omega\wedge \a, f_j\omega\wedge \a)_{L^2(X)}=\|\omega\wedge\a\|^{2}_{L^{2}(X)}.
	\end{equation}
On the other hand, since $\mathrm{supp}(df_{j})\subset B_{j+1}\backslash B_{j}$ and $|\b(x)|=O(\rho(x_{0},x))$, we have
\begin{equation}\label{3.32}
\begin{aligned}
|(\omega\wedge \a, df_j\wedge\beta\wedge \a)_{L^2(X)}|\leq C(j+1)\int_{B_{j+1}\backslash B_{j}}|\a|^2d\text{Vol}_g,
\end{aligned}
\end{equation}
where $C$ is a constant independent of $j$.

Following the argument in the proof of Theorem \ref{T3},  there exists a subsequence $\{j_{i}\}_{i\geq1}$ such that
	\begin{equation}\label{3.33}
	\lim_{i\rightarrow\infty}(j_{i}+1)\int_{B_{j_{i+1}}\backslash B_{j_{i}}}|\a|^2d\text{Vol}_g=0.
	\end{equation}  
Combining \eqref{3.30}-\eqref{3.33}, we deduce that $\|\omega\wedge\a\|^{2}_{L^{2}(X)}=0$, and thus $L_\omega(\a):=\omega\wedge\a=0$. By the injectivity assumption  \eqref{1.1-}, it follows that $\a=0$. This completes the proof of the theorem.
\end{proof}
\begin{proof}[\textbf{Proof of Corollary \ref{C1}}]
By Poincar\'e duality, it suffices to consider the case $k < n$ in Corollary \ref{C1}.
	
Recall the following well-known facts: the K\"{a}hler form $\w$ is parallel, and the Lefschetz map \[L_{\w}:\Om^{k}(X)\rightarrow\Om^{k+2}(X)\] 
is injective for all $0\leq k\leq n-1$. Therefore, by Theorem \ref{thm1.5}, every $L^{2}$-harmonic $k$-form vanishes; equivalently, $\mathcal{H}^{k}_{(2)}(X)=\{0\}$.  
\end{proof}

\section*{Acknowledgements}
Teng Huang was supported by the National Natural Science Foundation of China (Grant No. 12271496), the Youth Innovation Promotion Association of the Chinese Academy of Sciences. Weike Yu was supported by the National Natural Science Foundation of China (Grant No. 12501075), Basic Research Program of Jiangsu (No. BK20250644), Natural Science Foundation of the Jiangsu Higher Education Institutions of China (No. 25KJB110008), the start-up research funds from Nanjing Normal University with account No.184080H201B160.

\section*{Declarations}

\subsection*{Conflict of Interest}
The authors declare that there is no conflict of interest.

\subsection*{Data Availability}
This manuscript has no associated data.


\bigskip
\footnotesize
\begin{thebibliography}{SK}
	
	
	
	\bibitem{Atiyah}
	Atiyah, M.,
	\textit{ Elliptic operators, discrete groups and Von Neumann algebra.} Ast\'{e}risque, 32--33 (1976), 43--47.
	
	\bibitem{Ball}
	Ballman, W.,
	\textit{Lectures on K\"ahler Manifold.} 
	European Mathematical Society, London, 2006.
	
	\bibitem{Bes}
	Besse, A.L.,
	\textit{Einstein manifolds.}
	Springer-Verlag, Berlin, 1987.
	
	\bibitem{AH}
	Andrews, B., Hopper, C., 
	\textit{The Ricci Flow in Riemannian Geometry: A Complete Proof of the Differentiable 1/4-Pinching Sphere Theorem.}
	Springer-Verlag Berlin Heidelberg, 2011.
		
	\bibitem{CY}
	Chen, B.L., Yang, X.K.,
	\textit{Compact K\"{a}hler manifolds homotopic to negatively curved Riemannian manifolds.}
	Math. Ann. \textbf{370} (2018), 1477-1489.
	
	
	\bibitem{Chern}
	Chern, S. S.,
	\textit{On curvature and characteristic classes of a Riemannian manifold.}
	Abh. Math. Sem. Univ. Hamburg, \textbf{20} (1955), 117--126.
	
	\bibitem{CX}
	Cao, J.G., Xavier, F.,
	\textit{K\"{a}hler parabolicity and the Euler number of compact manifolds of non-positive sectional curvature.}
	Math. Ann. \textbf{319} (2001), 483--491.
	
	\bibitem{Carron}
	Carron, G.,
	\textit{$L^{2}$ harmonic forms on non-compact Riemannian manifolds.}
	Surveys in analysis and operator theory (Canberra, 2001), 49--59.
	
	\bibitem{Cro}
	Croke, C.B.,
	\textit{A sharp four-dimensional isoperimetric inequality.}
	Comment. Math. Helv. \textbf{59}(1984), 187--192.
	
	\bibitem{Dod}
	Dodziuk, J.,
	\textit{De Rham-Hodge theory for $L^{2}$-cohomology of infinite coverings.}
	Topology. \textbf{16} (1977), 157--165.
	
	\bibitem{Dodziuk}
	Dodziuk, J.,
	\textit{$L^{2}$ harmonic forms on complete manifolds. In: Yau, S. T. (ed.)} Seminar on Differential Geometry, Princeton University Press, Princeton. 
	Ann. Math Studies, \textbf{102} (1982), 291--302.
	
	\bibitem{GW}
	Greene, R. E., Wu, H.,
	\textit{Gap theorems for noncompact Riemannian manifolds.} 
	Duke Math. J. \textbf{49} (1982), 731–756.
	
	
	\bibitem{Gromov}
	Gromov, M., 
	\textit{K\"{a}hler hyperbolicity and $L^{2}$-Hodge theory. }
	J. Differential Geom. \textbf{33} (1991), 263--292.
	
	\bibitem{Heb}
	Hebey, E.,
	\textit{Nonlinear analysis on manifolds: Sobolev spaces and inequalities.}
	Courant Lecture Notes in Mathematics, vol. 5, New York University, Courant Institute of Mathematical Sciences/American Mathematical Society, New York/Providence, RI, 1999. 
	
	
	\bibitem{JZ}
	Jost, J., Zuo, Kang,
	\textit{Vanishing theorems for $L^{2}$-cohomology on infinite coverings of compact K\"{a}hler manifolds and applications in algebraic geometry.}
	Comm. Anal. Geom. \textbf{8} (2000), 1--30.
	
	
	\bibitem{Pansu}
	Pansu, P., 
	\textit{Introduction to  $L^{2}$-Betti number.}
	Riemannian geometry (Waterloo, ON, 1993) \textbf{4} (1993), 53--86.
	
	\bibitem{Pet}
	Petersen, P.,
	\textit{Riemannian geometry.}
	Graduate Texts in Mathematics, 171. Springer-Verlag, New York, 1998. xvi+432 pp. 
	
	\bibitem{Sin}  
	Singer, I.M.,
	\textit{Some remarks on operator theory and index theory.}
	In $K$-theory and operator algebras (Proc. Conf., Univ. Georgia, Athens, Ga., 1975), pages 128–138. Lecture Notes in Math., Vol. 575. Springer-Verlag, Berlin, 1977.
	
	\bibitem{Ver}
	Verbitsky, M., 
	\textit{Manifolds with parallel differential forms and Kähler identities for $G_2$-manifolds.}
	J. Geom. Phys. \textbf{61}(6), (2011), 1001–1016.
	
\end{thebibliography}
\end{document}